\newcommand{\quash}[1]{}
\newtheorem{Th}{Theorem}[section]
\newtheorem{prop}[Th]{Proposition}
\newtheorem{nt}[Th]{Remark}
\newtheorem{lemma}[Th]{Lemma}
\newtheorem{example}[Th]{Example}
\newfont{\ssdbl}{msbm8}
\newfont{\sdbl}{msbm9}
\newfont{\dbl}{msbm10 at 12pt}
\newcommand{\eqdef}{\stackrel{\rm def}{=}}
\newcommand{\oo}{{\cal O}}
\newcommand{\ff}{{\cal F}}
\newcommand{\g}{{\cal G}}
\newcommand{\ad}{{\cal A}}
\newcommand{\Spec}{\mathop {\rm Spec}}
\newcommand{\Supp}{\mathop {\rm supp}}
\newcommand{\Frac}{\mathop {\rm Frac}}
\newcommand{\da}{\mathbb{A}}
\newcommand{\dz}{\mathbb{Z}}
\newcommand{\dc}{\mathbb{C}}
\newcommand{\dr}{\mathbb{R}}
\newcommand{\dt}{\mathbb{T}}
\newcommand{\Ker}{{\rm Ker}\:}
\newcommand{\Image}{{\rm Im}\:}
\newcommand{\lrto}{\longrightarrow}
\def\Q{{\mathbb Q}}
\newcommand{\nsubset}{\not{\subset}}
\begin{document}

\author{
D. V. Osipov\footnote{The author is partially supported by Laboratory of Mirror Symmetry NRU HSE, RF Government grant, ag. № 14.641.31.0001}
}

\title{Arithmetic surfaces and adelic quotient groups
}
\date{}

\maketitle

\quad \qquad \qquad
{\em Dedicated to A.~N.~Parshin on the occasion of his 75th birthday}

\begin{abstract}
We explicitly calculate an arithmetic adelic quotient  group for a locally free sheaf on an arithmetic surface
when the fiber over the infinite point of the base is taken into account. The calculations are presented via a short exact sequence.
We relate the last term of this short exact sequence with the
projective limit of groups which are finite direct products of copies of
 one-dimensional real torus and  are connected with
first cohomology groups of locally free sheaves on the arithmetic surface.
\end{abstract}

\section{Introduction}

The ring of adeles for number fields and algebraic curves was introduced by C.~Chevalley and A.~Weil.

Higher adeles (or ring of adeles in higher dimensions) were introduced by A.~N.~Parshin and A.~A.~Beilinson.
A.~N.~Parshin defined adeles for smooth algebraic surfaces over a field in~\cite{P}.
A.~A.~Beilinson defined adeles for arbitrary Noetherian schemes in a short note~\cite{Be}.
Later proofs of Beilinson results concerning adelic resolutions of quasicoherent sheaves appeared in the paper of A.~Huber~\cite{H}.

The ultimate goal of the higher adeles program  is the generalization of the Tate-Iwasawa method from one-dimensional case to the case of higher dimensions, see~\cite{P1,P2}.
The Tate-Iwasawa method allows to obtain a meromorphic continuation to all of $\mathbb{C}$ and a functional equation for zeta-
and $L$-functions of number fields and the fields of rational functions of curves defined over finite fields, and this method works simultaneously
 in the number theory case and in the geometric case, see~\cite{T}.

In arithmetic algebraic  geometry, a well-known approach can be used where the problem is solved first for function fields by a method that can be transferred to the scheme part  of an arithmetic surface  related to a number field.  The next final step is to include  the Archimedean fibers of the surface.  This approach was quite successful for  Faltings's proof of  the Mordell conjecture, see~\cite{PZ}. The consideration of the Archimedean fibers was based on the Arakelov theory of arithmetic surfaces, see~\cite{Ar}.

Recently,  A.~N.~Parshin has developed a new version of the  Tate-Iwasawa method, see~\cite{P3}, where he has  removed the well-known manipulations with formulas  and  replaced  them by functoriality and duality considerations. The parts of these constructions can be done for the case of an algebraic surface over a finite field \footnote{This was shown in  his talks at the Steklov Mathematical Institute in November 2017.}.
In this paper, we extend the adelic results that are known for the algebraic surfaces to the case of  arithmetic surfaces.

If we consider an arithmetic surface~$X$, i.e. a two-dimensional normal integral scheme which is surjectively fibred over     $\mathop{\rm Spec} {\mathbb Z}$,
then the Parshin-Beilinson ring of adeles ${\mathbb A}_X$ of $X$ does not take into account at all, what is in the fiber  $X \times_{\mathbb Z} {\mathbb R}$
over ``the infinite point'' of $\mathop{\rm Spec} {\mathbb Z}$,
i.e. over the Archimedean valuation of the ring $\mathbb Z$.
A.~N.~Parshin and the author of this paper defined in~\cite{OsipPar2} a ring of arithmetic adeles ${\mathbb A}_X^{\rm ar}$ of the arithmetic surface $X$,
which takes into account the fiber of $X$ over ``the infinite point'' of $\mathop{\rm Spec} {\mathbb Z}$.
 Informally speaking,
the ring ${\mathbb A}_X$ is a complicated restricted product of two-dimensional local fields over all pairs: a closed point in $X$ and a formal branch
at $x$ of an integral $1$-dimensional subscheme  of $X$, where a two-dimensional local field is a finite extension of a field $\Q_p((t))$
or a field $\Q_p\{\{ t\}\}$, see more details, e.g., in~\cite[\S~2.1]{Osi}. Now,
to obtain the ring ${\mathbb A}_X^{\rm ar}$  from the Parshin-Beilinson ring of adeles $\da_X$, we add the fields $\dr((t))$
or $\dc((t))$ associated with ``horizontal curves'' on $X$ and ``infinite points'' of ``horizontal curves''. In other words, we add the restricted product of completions of local fields for points with non-transcendental coordinates on $X \times_{\mathbb Z} {\mathbb R}$.
It is important, as in the case of Parshin-Beilinson adeles of a scheme, to define various subgroups of ${\mathbb A}_X^{\rm ar}$  so that the results obtained, have analogy as with the classical one-dimensional case of number  fields as well as with the case of projective surfaces over a field.

The goal of this paper is to calculate explicitly an adelic quotient group:
\begin{equation}  \label{ad_quot}
\da_{X}^{\rm ar} (\ff) / \left( \da_{X, 01}^{\rm ar}(\ff) + \da_{X, 02}^{\rm ar}(\ff) \right)  \, \mbox{,}
\end{equation}
where $\ff$ is a locally free sheaf on an arithmetic surface $X$, and $\da_{X}^{\rm ar}(\ff)$,  $\da_{X, 01}^{\rm ar}(\ff)$,  $\da_{X, 02}^{\rm ar}(\ff)$
are the group of arithmetic higher adeles of $X$ and its subgroups, see more details below, in Section~\ref{Statement}.
The subgroups  $\da_{X, 01}^{\rm ar}(\ff)$  and  $\da_{X, 02}^{\rm ar}(\ff)$ were introduced in papers~\cite{SW1,SW2}. (Close subgroups were considered also in~\cite{Zh}.)

The subgroup $\da_{X, 01}^{\rm ar}(\ff) + \da_{X, 02}^{\rm ar}(\ff)$ is an analog of the subgroup $K$ in the adelic ring $\da_K$, where
$K$ is a global field, i.e. $K$ is either a number field or the field of rational functions of algebraic curve over a finite field.
{(For example, if $K$ is the field of rational functions of an algebraic projective curve $Z$ over a finite field, then $K=\da_0(\oo_Z)$ and
$\da_K = \da_{01} (\oo_Z)$ in the adelic complex for the structure sheaf $\oo_Z$ of the curve $Z$, see, e.g.,~\cite[\S3]{Osi}.)}
Then as it is well-known that for the global field $K$ the group
$\da_K/K$ is compact, and this statement has a lot of applications. {By explicit calculations we obtain similar result for the case of arithmetic surface $X$ and the group $\da^{\rm ar}_X(\ff)$.}

We recall explicit calculation when  $K$ is a number field such that $[K : \Q]=l$ and $E$ is the ring of integers in $K$.  The strong approximation theorem
immediately
 implies an exact sequence
 \begin{equation}  \label{one-dim}
 0 \lrto \prod_{\sigma} \hat{E}_{\sigma}  \lrto \da_K/ K \lrto \left( \prod_{\upsilon} K_{\upsilon} \right)/ E  \lrto 0  \, \mbox{,}
 \end{equation}
 where $\da_K$ is the ring of adeles of the field $K$, $\sigma$ runs over the set of maximal ideals of the ring $E$, $\hat{E}_{\sigma}$ is the corresponding completion, $\upsilon$ runs over the (finite) set of Archimedean places of the field $K$, $K_{\upsilon}$ is the completion. We note that the last non-zero term  in exact sequence~\eqref{one-dim} is isomorphic to $\mathbb{T}^{\, l}$, where ${\mathbb{T}} = \dr/\dz$ is the torus.

 On the other hand, for a normal projective irreducible algebraic surface $Y$ over a field $k$ we fix an  ample  divisor $\tilde{C}$ on $Y$.
   Then the complement $U = Y \setminus \Supp \tilde{C}$ is affine. And this situation can be considered as an analog of the $1$-dimensional case,
   but we fix instead of finite number of points of a global field (Archimedean places in number theory case) a divisor with an affine complement.

   For a locally free sheaf $\ff$ on $Y$, the corresponding adelic quotient group  was calculated in~\cite{Osip1}  and~\cite[\S~14]{OsipPar2} in the following form:
 \begin{multline}  \label{seq_th}
0 \lrto
G_1  \lrto
\da_Y(\ff)/ \left(  \da_{Y, 01}(\ff) + \da_{Y, 02}(\ff)   \right)   \lrto  G_2 \lrto 0
\end{multline}
with the linearly compact $k$-vector space\footnote{Sometimes we will use notation for the quotient group $N_1/N_2$ as ``fraction'' $\frac{N_1}{N_2}$.}
$$
G_1  \; \simeq  \;
 \frac{ \prod\limits_{D \subset Y, D \nsubset \tilde{C} } \left( \left(\frac{\mathop{{\prod}'}\limits_{x \in D}  \oo_{K_{x,D}}}{ \hat{\oo}_D}  \right) \otimes_{\hat{\oo}_D} \hat{\ff}_D \right)}{ \prod\limits_{x \in U} \hat{\ff}_x  }
$$
where $x$ runs over the set of (closed) points in  the  subscheme $U $, $D$ runs over the set of irreducible curves on $Y$ such that $D \, {\not{\hspace{-0.1cm}\subset}} \Supp \tilde{C}$,  ${\prod}'$ means an adelic (on algebraic surface) product, the ring $\oo_{K_{x,D}}$
is the product of discrete valuation rings from the finite set of two-dimensional local fields constructed by a pair $x \in D$ (for example,
if $K_{x,D} =k((u))((t))$ is a two-dimensional local field, then $\oo_{K_{x,D}}= k((u))[[t]]$), $\hat{\ff}_x$ is the completion of the stalk of $\ff$ at $x$, $\hat{\ff}_D$ is the completion of the stalk $\ff$ at the generic point of $D$.

The $k$-vector space $G_2$ is also linearly compact. We recall (see~\cite[\S~14.3]{OsipPar2}) that in the simplest case $Y = C \times_k C$,
$C =  \mathbb{P}^1_{k}$,
 $\tilde{C} = C \times y + y \times C$ ($y$ is a fixed  $k$-rational point on $C$),
 and $\ff = \oo_Y$ we have
$$
G_2\simeq  \frac{ k((u))((t))}{  k[u^{-1}]((t))  + k((u))[t^{-1}]  }  \, \mbox{.}
$$

Now if we change the locally linearly compact field $k((u))$ to the locally compact field $\dr$, and the discrete subspace $k[u^{-1}]$
to the discrete subgroup $\dz$, then $G_2$ should be equal to $t \dr[[t]]/ t \dz[[t]] \simeq t {\mathbb{T}}[[t]] $.

We give in this paper an explicit calculation
 for   the adelic  quotient group~\eqref{ad_quot} on an arithmetic surface $X$ and a locally free sheaf $\ff$ on $X$.
  This is the content of Theorem~\ref{main-Th} in section~\ref{Statement} of the paper. We don't formulate this theorem here, in introduction, but note that the result will generalize the result from formula~\eqref{one-dim} for a number field case,  and this result will be analogous to the result~\eqref{seq_th} for a projective algebraic surface $Y$ case.

More exactly, the answer, which is obtained in Theorem~\ref{main-Th}, gives a presentation of the group~\eqref{ad_quot}
via a short exact sequence,  where the first term is written analogously to the first term in exact sequence~\eqref{seq_th} (but we have to take into account additional terms like $\dr((t))$ or $\dc((t))$) and the group in this term is compact as well as $G_1$ is a linearly compact $k$-vector space. Moreover, the last term of this exact sequence in the simplest case  $X =\mathbb{P}^1_{\dz}$ and $\ff = \oo_X$
equals to $t \dr[[t]]/ t \dz[[t]] \simeq t {\mathbb{T}}[[t]] $ (this follows from explicit calculations given for $\mathbb{P}^1_{\dz}$ in
  Example~\ref{Examp}).

 We give another interesting remark.
 In Proposition~\ref{theta} we prove that  the last term in the short exact sequence for the group~\eqref{ad_quot}
 is isomorphic to the projective limit over $n \le 0$ of groups $H^1(X, \ff(nC)) \otimes_{\dz} {\mathbb T}$,
 where $C$ is an ample (fixed before calculations) Cartier divisor on $X$.
  Each of this groups is a finite direct product of copies of
  $\mathbb T$, and the projective limit is isomorphic to a countable product of groups isomorphic to ${\mathbb T}$.
 (This is evident on the Pontryagin dual side.)
 On the other hand, going back to exact sequence~\eqref{one-dim}, we note that in one-dimensional Arakelov geometry there is the Riemann-Roch theorem in the form of Poisson summation formula. It uses theta-functions constructed by the right hand side of formula~\eqref{one-dim}, i.e. by the cocompact lattice $E$ in the finite-dimensional $\dr$-vector space $\prod\limits_{\upsilon} K_{\upsilon}$, see more details in Remark~\ref{Arakelov}  below.

This paper is organized as follows. In Section~\ref{Statement}
we recall the basic notions from Parshin-Beilinson adelic theory and give necessary definitions for groups of arithmetic adeles on an arithmetic surface. Besides, we formulate here the main result of this paper, Theorem~\ref{main-Th}.
In Section~\ref{proof_th} we prove this theorem. Firstly, in Section~\ref{first_term} we calculate the first term of the corresponding short exact sequence. Secondly, in Section~\ref{last_non-zero_term} we calculate the last term of the corresponding short exact sequence.
Finally,  in Section~\ref{appl}  we prove that the last term of the corresponding short exact sequence is isomorphic to the projective limit of
 groups which are finite direct products of copies of
  ${\mathbb T}$ and these groups are related with the first cohomology groups of sheafs $\ff(nC)$.

\section{Statement of the result} \label{Statement}

Let $X$ be a two-dimensional integral normal scheme. We suppose that the natural morphism $f : X \to \Spec \dz$ is projective and surjective.
Then it follows that $f$ is a flat morphism.
We will call $X$ as an arithmetic surface.

Let $C = \bigcup\limits_{1 \le i \le w} C_i$ be a reduced one-dimensional subscheme of $X$ such that for any $1 \le i \le w$ the one-dimensional subscheme $C_i$ is integral and $f(C_i)= \Spec \dz$.

We suppose that an open subscheme $U = X \setminus C  \stackrel{j}{\lrto} X$ is an affine scheme. We note that this condition on $U$ is satisfied, for example, if a coherent sheaf $\oo_X(nC)$ is an ample invertible sheaf on $X$ for some integer $n \ge 1$.
 By~\cite[Chapter~5, Corollary~3.24]{Liu}   an invertible  sheaf $\mathcal L$ on $X$ is ample if and only if an invertible sheaf $(i_p)^* {\mathcal L}$ on a subscheme $ f^{-1}(p) \stackrel{i_p}{\longrightarrow} X$  is ample for any closed point $p$ of $\Spec  \dz$.

\bigskip

Let $\ff$ be a locally free sheaf of $\oo_X$-modules on $X$. For any point $q$ in $X$, let $\hat{\ff}_q$ be the completion of the stalk $\ff_q$ of the sheaf $\ff$ at a point $q$ of $X$.

By $\da_X(\ff)$ we denote the Parshin-Beilinson adelic group of $\ff$ on $X$:
$$
\da_{X}(\ff) = \da_{X, 012}(\ff)= \mathop{{\prod}'}_{x \in D} K_{x,D}(\ff)  \, \subset \, \mathop{{\prod}}_{x \in D} K_{x,D}(\ff) \, \mbox{,}
$$
where $x \in D$ is a pair with a closed point $x \in X$ and an integral closed one-dimensional subscheme ${D \subset X}$\footnote{Follow notation that comes from the higher adeles on an algebraic surface we will use sign ``$\in$'' when  $ x$ is a closed point in the corresponding scheme, and the sign ``$\subset$'' when a one-dimensional integral scheme $D$ is a closed subscheme in  $X$.},
the ring $K_{x,D} = \prod\limits_{1 \le i \le l} K_i$ with a two-dimensional local field $K_i$ being the completion of the field $\Frac \hat{\oo}_x$ with respect to a height prime ideal of the ring $\hat{\oo}_x$ associated with the ideal of the subscheme $D$ restricted to $\Spec \hat{\oo}_{x}$,
the group $K_{x,D}(\ff)  = \hat{\ff}_x \otimes_{\hat{\oo}_x} K_{x,D} $ (see more details in~\cite{H}, \cite{Osi}, \cite[\S~3.1]{Osip1}).

We denote $\oo_{K_{x,D}}(\ff) =  \hat{\ff}_x \otimes_{\hat{\oo}_x} \oo_{K_{x,D}}$, where the ring $\oo_{K_{x,D}}= \prod\limits_{1 \le i \le l} \oo_{K_i}$ with $\oo_{K_i}$
being the discrete valuation ring in a two-dimensional local field $K_i$ associated with the pair $x \in D$.

By $\prod'$ we will denote the adelic product on $X$, i.e. the intersection with $\da_X(\ff)$ inside of $\prod$.

We recall that there are the natural subgroups $\da_{X, 01}(\ff)$ and $\da_{X, 02}(\ff)$ of the adelic group $\da_{X}(\ff)$:
\begin{gather}  \label{subgr}
\da_{X,01}(\ff)= \mathop{{\prod}'}_{D} K_{D}(\ff)  = \da_X(\ff)  \cap \prod_D K_D(\ff)   \, \mbox{,}  \\
\label{subgr2}
\da_{X, 02}(\ff) = \mathop{{\prod}'}_{x} K_{x}(\ff) = \da_X(\ff)  \cap \prod_x K_x(\ff)  \, \mbox{,}
\end{gather}
where the intersections are taken inside the group $\prod\limits_{x \in D} K_{x,D}(\ff)$, and
the group\footnote{By $\hat{\ff}_D $
we mean the completion of the stalk of $\ff$ at the generic point of $D$, i.e. at the non-closed point on $X$ whose closure coincides with $D$.} \linebreak ${K_D(\ff) = \hat{\ff}_{D}  \otimes_{\hat{\oo}_D} K_D}$, the field $K_D$ is the completion of the field $\Q(X)$ of rational functions on $X$ with respect to discrete valuation given by $D$, the group $K_x(\ff) = \hat{\ff}_x  \otimes_{\hat{\oo}_x} K_x$, the ring $K_x$ is the localization of the ring $\hat{\oo}_x$
with respect to the multiplicative system $\oo_x \setminus 0$. The products $\mathop{\prod}\limits_D$ and $\mathop{\prod}\limits_x$
are diagonally embedded into the product $\mathop{\prod}\limits_{x \in D}$ via the natural maps:
$$
\xymatrix{
p_{x,D, \ff} \; : \; K_x(\ff) \; \ar@{^{(}->}[r]   & \;  K_{x,D}(\ff)
}
\qquad \mbox{and} \qquad
\xymatrix{
q_{x,D, \ff} \; : \; K_D(\ff)  \; \ar@{^{(}->}[r]  & \; K_{x,D}(\ff)  \,  \mbox{.}
}
$$

\bigskip

Let $V$ be a locally linearly compact vector space over a field $k$. Let $k' / k$ be a field extension. We denote
$$
V \hat{\otimes}_k k' \; \eqdef \; \varprojlim_{W} \left( (V/W) \otimes_k k' \right)  \; = \; \varinjlim_{W'}  \varprojlim_{W \subset W'} \left( (W'/W) \otimes_k k' \right)   \, \mbox{,}
$$
where $W$ and $W'$ run over the set of open linearly compact $k$-vector subspaces of $V$.
For example, $\Q((t)) \hat{\otimes}_{\Q} \dr = \dr((t)) $.

Let a curve $X_{\Q}= X \times_{\Spec \dz} \Spec \Q$. Let $\da_{X_{\Q}}(\ff_{\Q})$ be the adelic group of the locally free sheaf $\ff_{\Q} = (i_{\Q})^* \ff$ on the generic fibre   $X_{\Q} \stackrel{i_{\Q}}{\lrto} X  $. Then $\da_{X_{\Q}}(\ff_{\Q})$ is a locally linearly compact $\Q$-vector space
with the base of open subspaces given by divisors on the curve $X_{\Q}$.

The arithmetic adelic group of $X$ was introduced in~\cite[Example~11]{OsipPar2} (see also applications in~\cite[\S~4]{Osi2016}). By definition,
\begin{equation}  \label{ar_adel}
\da_{X}^{\rm ar}(\ff) \; \eqdef \; \da_X(\ff) \oplus \left(  \da_{X_{\Q}}(\ff_{\Q}) \hat{\otimes}_{\Q} \dr  \right)  \, \mbox{.}
\end{equation}
We note that
$$
\da_{X_{\Q}}(\ff_{\Q}) \hat{\otimes}_{\Q} \dr \simeq \mathop{{\prod}'}_{D, \, f(D)= \Spec \dz} \left( K_{D}(\ff) \hat{\otimes}_{\Q} \dr \right) \; \mbox{,}
$$
where $\mathop{{\prod}'}$ means here the restricted (as in adeles on a curve) product with respect to the subgroups $\hat{\ff}_D \hat{\otimes}_{\Q} \dr $.

\medskip

The subgroups $\da_{X, 01}^{\rm ar}(\ff)$ and $\da_{X,02}^{\rm ar}(\ff)$ of the group $\da_{X}^{\rm ar}(\ff)$ were introduced in~\cite{SW1,SW2}.
The subgroup $\da_{X, 01}^{\rm ar}(\ff)$
coincides with $\da_{X, 01}(\ff)$ but is mapped into both summands of $\da_{X}^{\rm ar}(\ff)$ of formula~\eqref{ar_adel}, where the first map is the natural embedding from the definition in formula~\eqref{subgr}, and the second map is the composition of the natural maps:
\begin{equation}  \label{second_embed}
\da_{X, 01}^{\rm ar}(\ff)  \lrto \mathop{{\prod}'}_{D, \, f(D)= \Spec \dz} K_{D}(\ff)  \; \hookrightarrow \mathop{{\prod}'}_{D, \, f(D)= \Spec \dz} \left( K_{D}(\ff) \hat{\otimes}_{\Q} \dr \right) \; \simeq \; \da_{X_{\Q}}(\ff_{\Q}) \hat{\otimes}_{\Q} \dr  \,  \mbox{.}
\end{equation}
The subgroup $\da_{X, 02}^{\rm ar}(\ff)$ is, by definition,
\begin{equation}  \label{ad-ar-02}
\da_{X, 02}^{\rm ar}(\ff)  \eqdef \da_{X, 02}(\ff)  \oplus \left( \ff_{\eta} \otimes_{\Q}  \dr \right)  \, \mbox{,}
\end{equation}
where $\eta$ is the generic point of $X$, the first summand in~\eqref{ad-ar-02} is naturally embedded into the first summand of~\eqref{ar_adel}
by formula~\eqref{subgr2},
and the second summand in~\eqref{ad-ar-02} is naturally diagonally embedded into the second summand of~\eqref{ar_adel}.

\bigskip

We recall the definition of a subgroup $A_C(\ff)  \subset \prod\limits_{1 \le i \le w} K_{C_i}(\ff)$ from~\cite[\S~3.2.2]{Osip1}.

For this goal we first define  the subgroup $B_{x,C}(\ff) \subset K_x(\ff)$  as
\begin{equation}   \label{B-form}
B_{x,C}(\ff) \eqdef \bigcap_{D \ni x, D \nsubset C} p_{x,D, \ff}^{-1} \left( p_{x,D, \ff}(K_x(\ff))  \cap \oo_{K_{x,D}}(\ff) \right)  \, \mbox{.}
\end{equation}
By remark after Lemma~1 from~\cite[\S~3.2.1]{Osip1}  we also have
$$
B_{x,C}(\ff) \simeq \hat{\ff}_x   \otimes_{\oo_x} (j_* j^* \oo_X)_x  \mbox{.}
$$

Now the  subgroup $A_C(\ff) $ is defined as the image of the  projection of the group
$\Ker \Xi$ to the group   $ \prod\limits_{1 \le i \le w}  K_{C_i}(\ff)$, where the map
\begin{equation}  \label{Xi}
\Xi \, : \,  \prod\limits_{1 \le i \le w} K_{C_i}(\ff)  \oplus \prod_{x \in C} B_{x,C}(\ff)  \lrto   \prod\limits_{1 \le i \le w}
\prod_{x \in C_i}  K_{x,C_i}(\ff)       \, \mbox{,}
\end{equation}
and   $\Xi (z \oplus v) = \prod\limits_{1 \le i \le w}
\prod\limits_{x \in C_i} q_{x, C_i, \ff}(z)   -  \prod\limits_{1 \le i \le w}
\prod\limits_{x \in C_i} p_{x, C_i, \ff}(v) $ for elements $z \in \prod\limits_{1 \le i \le w} K_{C_i}(\ff) $
and $v \in \prod\limits_{x \in C} B_{x,C}(\ff) $.
By~\cite[Proposition~1]{Osip1} we also have
$$
A_C(\ff)  \simeq \varinjlim\limits _{n} \varprojlim\limits_{m < n}  H^0 \left( X, \ff \otimes_{\oo_X} \left( \oo_X(nC)/   \oo_X(mC) \right)  \right)
\, \mbox{.}
$$

Let an affine curve $U_{\Q} = U \times_{\Spec \dz} \Spec \Q$.
We denote a $\Q$-vector space
$$A_{U_{\Q}}(\ff)  \eqdef H^0 (U_{\Q}, \ff_{\Q} \mid_{U_{\Q}})  \, \mbox{.}$$

The main result of this paper is the following theorem.
\begin{Th}  \label{main-Th}
Let the group $\Phi = \da_{X}^{\rm ar} (\ff) / \left( \da_{X, 01}^{\rm ar}(\ff) + \da_{X, 02}^{\rm ar}(\ff) \right)$. Then there is an exact sequence:
\begin{multline}
0 \lrto
\frac{
\left( \mathop{{\prod}'}\limits_{x \in D, D \nsubset C} \oo_{K_{x,D}}(\ff) \right)
\oplus
\left(
\prod\limits_{D \nsubset C, \, f(D)= \Spec \dz} \left( \hat{\ff}_D \hat{\otimes}_{\Q} \dr   \right)
\right)
}{
\prod\limits_{D \subset X, D \nsubset C}  \hat{\ff}_D
+ \prod\limits_{x \in U} \hat{\ff}_x
}
\lrto \Phi
\lrto
 \\ \label{main-seq} \lrto
\frac{\prod\limits_{1 \le i \le w} \left( K_{C_i}(\ff) \hat{\otimes}_{\Q} \dr \right) }{A_C(\ff) + \left( A_{U_{\Q}}(\ff) \otimes_{\Q}  \dr  \right) }
\lrto
0 \, \mbox{,}
\end{multline}
where the group $\prod\limits_{D \subset X, D \nsubset C}  \hat{\ff}_D$ is canonically mapped to the both summands of the ``numerator'' (cf.~formula~\eqref{second_embed}), and
the group  $\prod\limits_{x \in U} \hat{\ff}_x$ is mapped only to the first summand of the ``numerator''.
\end{Th}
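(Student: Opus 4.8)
The plan is to present~\eqref{main-seq} as the tautological short exact sequence attached to two subgroups of $\da_X^{\rm ar}(\ff)$. Set $M = \da_{X,01}^{\rm ar}(\ff)+\da_{X,02}^{\rm ar}(\ff)$, so that $\Phi = \da_X^{\rm ar}(\ff)/M$, and let $N$ denote the numerator of the left-hand term of~\eqref{main-seq}, i.e. the direct sum of the integral finite adeles $\mathop{{\prod}'}_{x\in D,\, D\nsubset C}\oo_{K_{x,D}}(\ff)$ and the integral Archimedean adeles $\prod_{D\nsubset C,\, f(D)=\Spec\dz}(\hat{\ff}_D\hat{\otimes}_{\Q}\dr)$; these are visibly subgroups of the two summands of~\eqref{ar_adel}. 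Then~\eqref{main-seq} is the standard exact sequence
\[
0 \lrto N/(N\cap M) \lrto \da_X^{\rm ar}(\ff)/M \lrto \da_X^{\rm ar}(\ff)/(N+M) \lrto 0 ,
\]
and the theorem reduces to two computations: $N\cap M = \prod_{D\nsubset C}\hat{\ff}_D + \prod_{x\in U}\hat{\ff}_x$ (the first term, done in Section~\ref{first_term}) and $\da_X^{\rm ar}(\ff)/(N+M)\simeq\prod_{1\le i\le w}(K_{C_i}(\ff)\hat{\otimes}_{\Q}\dr)/(A_C(\ff)+A_{U_{\Q}}(\ff)\otimes_{\Q}\dr)$ (the last term, Section~\ref{last_non-zero_term}).

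\textbf{First term.} One inclusion is immediate: $\prod_{x\in U}\hat{\ff}_x\subset\da_{X,02}(\ff)$ by~\eqref{subgr2}, and it lies in $N$ because every $D\ni x$ with $x\in U$ satisfies $D\nsubset C$ and $\hat{\ff}_x\subset\oo_{K_{x,D}}(\ff)$; likewise $\prod_{D\nsubset C}\hat{\ff}_D\subset\da_{X,01}(\ff)$ by~\eqref{subgr} and embeds diagonally (cf.~\eqref{second_embed}) into both summands of $N$. Hence the asserted denominator is contained in $N\cap M$. The content is the reverse inclusion: an element of $N$ that is simultaneously a sum of an $01$-adele and an $02$-adele is, by the affineness of $U$, forced to have its finite component in $\prod_{D\nsubset C}\hat{\ff}_D+\prod_{x\in U}\hat{\ff}_x$ and its Archimedean component equal to the diagonal image of the corresponding curve-stalk data, which is exactly the claimed denominator.

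\textbf{Last term.} For $\da_X^{\rm ar}(\ff)/(N+M)$ I would construct an isomorphism onto the arithmetically completed curve-adeles $\prod_i(K_{C_i}(\ff)\hat{\otimes}_{\Q}\dr)$ modulo the stated subgroups. Since $\da_{X,01}^{\rm ar}(\ff)$ is embedded \emph{diagonally} in~\eqref{ar_adel}, the naive projection to the $C_i$-components does not annihilate $M$; the identification must instead combine the finite and Archimedean data through the maps $q_{x,C_i,\ff}$ and $p_{x,C_i,\ff}$ and the integral subgroups $\oo_{K_{x,C_i}}(\ff)$ and $B_{x,C}(\ff)$ that enter the definition~\eqref{Xi} of $\Xi$, so that the diagonal cancels. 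A strong-approximation argument, again powered by the affineness of $U$, then shows that this induces an isomorphism and that the image of $M$ is precisely $A_C(\ff)+A_{U_{\Q}}(\ff)\otimes_{\Q}\dr$: the finite subgroups contribute $A_C(\ff)$ through the integral compatibility built into $\Xi$, while the Archimedean generic summand $\ff_\eta\otimes_{\Q}\dr$ of~\eqref{ad-ar-02} contributes $A_{U_{\Q}}(\ff)\otimes_{\Q}\dr$. That one recovers exactly the regular-on-$U_{\Q}$ sections, and not all of $\ff_\eta$, uses the description $B_{x,C}(\ff)\simeq\hat{\ff}_x\otimes_{\oo_x}(j_*j^*\oo_X)_x$ and, once more, the affineness of $U$.

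\textbf{Main obstacle.} I expect the crux to lie on the Archimedean side, for two reasons. First, the functor $V\mapsto V\hat{\otimes}_{\Q}\dr$ is the $\varprojlim\varinjlim$ construction of the definition preceding~\eqref{ar_adel} and is not exact in general, so neither the surjectivity in the last term nor the computation of the image of $M$ can be read off formally from the finite (Parshin--Beilinson) case; each must be carried out at the level of the open linearly compact subspaces. Second, the diagonal embedding of $\da_{X,01}^{\rm ar}(\ff)$ genuinely couples the two summands of $\da_X^{\rm ar}(\ff)$, so $\Phi$ does not split as a product of a finite and an Archimedean quotient; arranging the identification of the last term so as to cancel this diagonal, while simultaneously keeping track of the integral structures $\oo_{K_{x,C_i}}(\ff)$ and $B_{x,C}(\ff)$, is the technical heart of the argument.
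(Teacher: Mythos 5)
Your skeleton coincides with the paper's: sequence~\eqref{main-seq} is indeed the tautological exact sequence attached to the two subgroups $N$ and $M=\da_{X,01}^{\rm ar}(\ff)+\da_{X,02}^{\rm ar}(\ff)$ of $\da_X^{\rm ar}(\ff)$, and the theorem reduces to computing $N\cap M$ and $\da_X^{\rm ar}(\ff)/(N+M)$; this is exactly how Sections~\ref{first_term} and~\ref{last_non-zero_term} are organized. The problem is that your proposal only describes these two computations instead of performing them, and the mechanism you name for the first one could not succeed as stated, so there is a genuine gap.

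Concretely, you claim that an element of $N\cap M$ is forced into $\prod_{D\nsubset C}\hat{\ff}_D+\prod_{x\in U}\hat{\ff}_x$ ``by the affineness of $U$'' (via strong approximation). Affineness of $U$ cannot be the operative ingredient, because in the purely geometric Parshin--Beilinson situation, with the very same affine $U$, the analogous intersection is strictly larger: by \cite[Theorem~1]{Osip1} (formula~\eqref{form-G} of the paper) it equals $\prod_{D\nsubset C}\hat{\ff}_D+\prod_{x\in U}\hat{\ff}_x+(\tau-\gamma)(A_C(\ff))$. The entire arithmetic content of the first-term computation is to show that the Archimedean constraint kills the extra summand $(\tau-\gamma)(A_C(\ff))$, and also to show that the projection $E\lrto G$ of~\eqref{def-E}--\eqref{form-G} is injective. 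Both steps require an arithmetic rationality input that your proposal never identifies: Lemma~\ref{Inters}, i.e. the equality $\bigl(\hat{\g}_s\otimes_{\oo_s}F(S)\bigr)\cap\bigl(\g_{\eta}\otimes_{\Q}\dr\bigr)=\g_{\eta}$ inside $\bigl(\hat{\g}_s\otimes_{\oo_s}F(S)\bigr)\hat{\otimes}_{\Q}\dr$, which is the two-dimensional avatar of the discreteness of a number field in its adele ring. In the paper, affineness of $U$ enters only through the cocycle--coboundary step ($H^1(U,\oo_U)=0$), and that step alone reproduces the geometric answer with the unwanted $A_C(\ff)$-term; it is Lemma~\ref{Inters} that pins the components $b_{01,C_i}$ and $b_{\infty}$ down to $\ff_{\eta}$, after which the extra term can be absorbed into the denominator. (Your sketch of the last term is, in outline, the paper's actual route: quotient by the diagonal copy of $\mathop{{\prod}'}\limits_{D,\,f(D)=\Spec\dz}K_D(\ff)$, a vanishing statement, identification of the image of that diagonal with $A_C(\ff)$, and the curve-level computation of \cite[Remark~1]{Osip1}; but it too is only a description, and note that the key vanishing~\eqref{eq-zero} rests on affineness of the infinitesimal thickenings of $C$, not of $U$.)
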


\begin{nt}{\em
As it was shown in~\cite[\S~14.3, Remark~27]{OsipPar2}, the first non-zero term in exact sequence~\eqref{main-seq} is a compact group.
The last non-zero term in exact sequence~\eqref{main-seq} will be analyzed in Section~\ref{appl}  below.
}
\end{nt}
\begin{example}  \label{Examp} {\em
In the simplest case when $X = \mathbb{P}^1_{\dz}$, $C$ is a hyperplane section, $U = \mathbb{A}^1_{\dz}$,  $\ff = \oo_X$ we have
the following subgroups which appear in the last non-zero term  in exact sequence~\eqref{main-seq}:
$$
\prod\limits_{1 \le i \le w} \left( K_{C_i}(\ff) \hat{\otimes}_{\Q} \dr \right) \simeq \dr((t)) \;  \mbox{,}  \qquad
A_C(\ff) \simeq \dz((t))
 \;  \mbox{,}  \qquad
 A_{U_{\Q}}(\ff)  \otimes_{\Q} \dr \simeq \dr[t^{-1}]  \, \mbox{.}
$$
}
\end{example}

\section{Proof of the theorem}  \label{proof_th}
In this section we prove Theorem~\ref{main-Th}.
In section~\ref{first_term} we will calculate the first non-zero term in exact sequence~\eqref{main-seq}.
In section~\ref{last_non-zero_term}  we will calculate the last non-zero term in exact sequence~\eqref{main-seq}.

We note also that we will use adelic complex ${\mathcal A}_Y(\g)$ on a two-dimensional integral normal finite type over $\Spec \dz$ scheme $Y$ for a locally free sheaf $\g$
of $\oo_Y$-modules:
$$
\begin{array}{ccccc}
\da_{Y, 0}(\g) \oplus \da_{Y,1}(\g) \oplus \da_{Y,2}(\g) & \lrto    & \da_{Y,01}(\g) \oplus \da_{Y,02}(\g) \oplus \da_{Y,12}(\g) &
\lrto & \da_{Y,012}(\g) \\
(a_0, a_1, a_2)            & \mapsto & (a_1 - a_0, a_2 - a_0, a_2 -a_1)  &
& \\
                           &        & (a_{01}, a_{02}, a_{12})             &
			   \mapsto & a_{01} - a_{02} + a_{12}  \mbox{.}
\end{array}
$$
Here subgroups $\da_{Y,01}(\g)$ and $\da_{Y,02}(\g)$
 of the group $\da_{Y,012}(\g)=\da_Y(\g)$ were defined before, see formulas~\eqref{subgr}-\eqref{subgr2}.
 By definition, the other subgroups of the group $\da_Y(\g)$ used in the adelic complex are:
 \begin{equation}  \label{ad-subgr}
 \da_{Y,12}(\g)=  \mathop{{\prod}'}\limits_{x \in D, D \nsubset C} \oo_{K_{x,D}}(\ff) \, \mbox{,} \quad
 \da_{Y, 0}(\g)= \g_{\eta}  \, \mbox{,} \quad \da_{Y,1}(\g)= \prod_{D \subset Y} \hat{\g}_D \, \mbox{,}  \quad
 \da_{Y,2}(\g) = \prod_{x \in Y} \hat{\g}_x   \, \mbox{,}
\end{equation}
where $\eta$ is the generic point of the scheme $Y$, and pairs $x \in D$ on $Y $ are as in Section~\ref{Statement}.

The main property of the adelic complex is that $H^i(\ad_Y(\g))= H^i(Y, \g)$ for $0 \le i \le 2$.
In particular, if $H^i(Y, \g) =0$ for $i =1$ or $i=2$, then the condition that $i$th cocycle is a coboundary in the adelic complex gives analogs of approximations theorems  for two-dimensional schemes (compare with~\cite[\S~2]{Osip1}).

\subsection{First non-zero term in exact sequence~\eqref{main-seq}}
In this section we calculate the first non-zero term in exact sequence~\eqref{main-seq}.

\label{first_term}
We consider a natural embedding:
$$
\left( \mathop{{\prod}'}\limits_{x \in D, D \nsubset C} \oo_{K_{x,D}}(\ff) \right)
\oplus
\left(
\prod\limits_{D \nsubset C, \, f(D)= \Spec \dz} \left( \hat{\ff}_D \hat{\otimes}_{\Q} \dr   \right) \right)
\;
\lrto
\;
\da_X^{\rm ar}(\ff)  \, \mbox{,}
$$
where the first and the second summands are mapped to the corresponding summands of~\eqref{ar_adel}. We denote
\begin{equation}  \label{def-E}
E = \left(
\left( \mathop{{\prod}'}\limits_{x \in D, D \nsubset C} \oo_{K_{x,D}}(\ff) \right)
\oplus
\left(
\prod\limits_{D \nsubset C, \, f(D)= \Spec \dz} \left( \hat{\ff}_D \hat{\otimes}_{\Q} \dr   \right) \right)
\right)
\;
\cap
\;
\left(
\da_{X, 01}^{\rm ar}(\ff)  + \da_{X, 02}^{\rm ar}(\ff)
\right)  \, \mbox{.}
\end{equation}

We recall the calculations for the quotient group of the group of Parshin-Beilinson adeles on a two-dimensional scheme.
Namely, from~\cite[Theorem~1]{Osip1} we have
\begin{multline}
G =
\left(
\mathop{{\prod}'}\limits_{x \in D, D \nsubset C} \oo_{K_{x,D}}(\ff) \right)
\;
\cap
\;
\left(
\da_{X, 01}(\ff)  + \da_{X, 02}(\ff)
\right) = \\  \label{form-G}
=
\prod\limits_{D \subset X, D \nsubset C}  \hat{\ff}_D
+ \prod\limits_{x \in U} \hat{\ff}_x + (\tau -\gamma)(A_C(\ff))
\, \mbox{,}
\end{multline}
where the intersection is taken inside the group $\da_X(\ff)$, the maps $\gamma$ and $\tau$ are the following natural maps:
\begin{gather}
\gamma \; : \; A_C(\ff)  \hookrightarrow \prod_{1 \le i \le w} K_{C_i}(\ff)  \hookrightarrow \da_X(\ff)  \nonumber \\
\label{second-line}
\tau \; : \;
A_C(\ff)
\hookrightarrow
\mathop{{\prod}'}_{x \in C} B_{x,C}(\ff)
\hookrightarrow \mathop{{\prod}'}_{x \in X}  K_x(\ff)
\hookrightarrow \da_X(\ff)
\end{gather}
and the first arrow in formula~\eqref{second-line}
is the composition of maps
$$A_C(\ff)
\simeq \Ker \Xi \lrto
\mathop{{\prod}'}_{x \in C} B_{x,C}(\ff)$$ which also given as the map
$z \mapsto v$ (see definition of the map $\Xi$ in formula~\eqref{Xi}).

A natural projection map $\da_X^{\rm ar}(\ff) \lrto \da_X(\ff)  $  induces a well-defined map:
$$
\theta \; : \;  E  \lrto G  \, \mbox{.}
$$

\medskip

First, we will prove that the map $\theta$ is injective.
We will need the following simple lemma.
\begin{lemma} \label{Inters}
Let $\g$ be a locally free sheaf on an irreducible algebraic curve $S$ over a number field $F$. Then we have
$$
(\hat{\g}_s  \otimes_{\oo_s} F(S))  \cap \left(\g_{\eta} \otimes_{\Q} \dr \right) = \g_{\eta} \, \mbox{,}
$$
where $s$ is a (closed) point  on $S$, $\oo_s$ is the local ring of the point $s, $ $\eta$ is the generic point, and the intersection is taken inside the group
$(\hat{\g}_s \otimes_{\oo_s} F(S) )  \hat{\otimes}_{\Q} \dr$.
\end{lemma}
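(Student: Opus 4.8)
The plan is to reduce the statement to the faithful flatness of the extension $\dr/\Q$. Write $V \eqdef \hat{\g}_s \otimes_{\oo_s} F(S)$ for the ambient space. Since $\g$ is locally free, $\hat{\g}_s$ is a free module of finite rank over the complete local ring $\hat{\oo}_s$, and $\hat{\oo}_s \otimes_{\oo_s} F(S) = \Frac(\hat{\oo}_s)$ is a finite product of one-dimensional local fields over the residue field $\kappa(s)$, which is finite over $\Q$. Hence $V$ is a locally linearly compact $\Q$-vector space, the completed tensor product $V \hat{\otimes}_\Q \dr = \varprojlim_W (V/W) \otimes_\Q \dr$ is defined as in the excerpt, and both $V$ and $\g_\eta \otimes_\Q \dr$ map into it through the ordinary tensor product $V \otimes_\Q \dr$. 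I would first record that these two subgroups actually lie inside the image of $V \otimes_\Q \dr$, and then compute their intersection there.

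Two injectivity statements are needed. First, the canonical map $\g_\eta \to V$ is injective: since $\g_\eta = \g_s \otimes_{\oo_s} F(S)$ and $\hat{\oo}_s$ is faithfully flat over the Noetherian local ring $\oo_s$, the map $\g_s \hookrightarrow \hat{\g}_s = \g_s \otimes_{\oo_s} \hat{\oo}_s$ is injective, and applying the exact functor $- \otimes_{\oo_s} F(S)$ gives the embedding $\iota : \g_\eta \hookrightarrow V$. Second, the natural map $V \otimes_\Q \dr \to V \hat{\otimes}_\Q \dr$ is injective. To see this I would fix a $\Q$-basis of $\dr$ containing $1$, say $\dr = \Q \cdot 1 \oplus \bigoplus_{j} \Q e_j$; flatness of $\dr/\Q$ identifies $\ker\big(V \otimes_\Q \dr \to (V/W) \otimes_\Q \dr\big)$ with $W \otimes_\Q \dr$, and the chosen basis shows $\bigcap_W (W \otimes_\Q \dr) = (\bigcap_W W) \otimes_\Q \dr = 0$, since $\bigcap_W W = 0$ in the Hausdorff space $V$. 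Consequently the intersection in the statement, a priori taken in $V \hat{\otimes}_\Q \dr$, coincides with the intersection of $V \otimes 1$ and $\iota(\g_\eta) \otimes_\Q \dr$ computed inside $V \otimes_\Q \dr$.

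It then remains to prove the purely linear-algebraic identity $(V \otimes 1) \cap (\iota(\g_\eta) \otimes_\Q \dr) = \iota(\g_\eta) \otimes 1$, which gives exactly $\g_\eta$. Tensoring the exact sequence $0 \to \iota(\g_\eta) \to V \to V/\iota(\g_\eta) \to 0$ with $\dr$ over $\Q$ and using flatness, an element of $V \otimes 1$ lies in $\iota(\g_\eta) \otimes_\Q \dr$ if and only if its image in $(V/\iota(\g_\eta)) \otimes_\Q \dr$ vanishes, i.e. if and only if it already lies in $\iota(\g_\eta)$; this is the desired equality.

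The computation is essentially formal once the relationship between $\hat{\otimes}_\Q \dr$ and $\otimes_\Q \dr$ is clarified, and I expect the only genuinely delicate point to be the second injectivity step, namely controlling the kernel of $V \otimes_\Q \dr \to V \hat{\otimes}_\Q \dr$; choosing a $\Q$-basis of $\dr$ through $1$ is what makes the infinite intersection $\bigcap_W (W \otimes_\Q \dr)$ collapse to $0$ and lets the whole argument descend to the faithful flatness of $\dr$ over $\Q$.
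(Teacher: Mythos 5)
Your proof is correct, and it takes a genuinely different route from the paper's. The paper argues geometrically: after reducing to $\g = \oo_S$, it verifies the base case $S = \mathbb{P}^1_F$, $s=[0:1]$ by the coefficient-wise observation $F((t)) \cap \left( F[t] \otimes_{\Q} \dr \right) = F[t]$, and then transfers this to an arbitrary curve by choosing a morphism $r : S \to \mathbb{P}^1_F$ with $r(s)=[0:1]$ and $r(s') \ne [0:1]$ for $s' \ne s$, and tensoring the exact sequence $0 \to F(t) \to F((t)) \oplus \left( F(t) \otimes_{\Q} \dr \right) \to F((t)) \otimes_{\Q} \dr$ with the finite extension $F(S)$ over $F(t)$. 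You instead prove an abstract statement in which the curve, the number field and the sheaf play no role: for a Hausdorff locally linearly compact $\Q$-vector space $V$ and \emph{any} $\Q$-subspace $A \subseteq V$, one has $V \cap \left( A \otimes_{\Q} \dr \right) = A$ inside $V \hat{\otimes}_{\Q} \dr$. The entire content is your second injectivity step, namely that $V \otimes_{\Q} \dr \to V \hat{\otimes}_{\Q} \dr = \varprojlim_W \left( (V/W) \otimes_{\Q} \dr \right)$ is injective because its kernel is $\bigcap_W \left( W \otimes_{\Q} \dr \right)$, which a Hamel basis of $\dr$ over $\Q$ identifies with $\left( \bigcap_W W \right) \otimes_{\Q} \dr = 0$; after that, the intersection computation inside $V \otimes_{\Q} \dr$ is pure linear algebra (flatness, or your basis through $1$). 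What each approach buys: the paper's stays within explicit Laurent-series manipulations but needs genuine geometric input (an affine complement $S \setminus s$ to build $r$, and implicitly the identification $F((t)) \otimes_{F(t)} F(S) \simeq \Frac(\hat{\oo}_s)$, which uses separability and the fact that $s$ is the only point over $[0:1]$); yours isolates the actual mechanism, applies verbatim to singular curves and arbitrary subspaces, and — notably — confronts head-on the relation between $\otimes_{\Q}\dr$ and $\hat{\otimes}_{\Q}\dr$, a point on which the paper's argument is silent even though its final tensoring step (whose displayed sequence ends in the uncompleted $F((t)) \otimes_{\Q} \dr$) implicitly relies on the same injectivity to say anything about an intersection taken inside the completed tensor product.
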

\begin{proof} Clearly, we can change a sheaf $\g$ to a structure sheaf $\oo_S$.

Now this lemma is evident when a curve  $S= \mathbb{P}^1_{F}$, a point $s= [0 :1]$, since
$$
F((t)) \cap \left( F[t] \otimes_{\Q} \dr \right) = F[t]  \, \mbox{,}
$$
and we can change in the last formula $F[t]$ to $F(t)$.
For arbitrary curve $S$ and a point $s \in S$ we consider a morphism $r : S \lrto \mathbb{P}^1_{F}$ with the property  ${r(s)= [0:1]}$ and ${r(s') \ne [0:1]}$ for any other point $s'$ on $S$. (For example, $r = g^{-1}$, where $g \in H^0(S \setminus s, \oo_{S \setminus s} )$, $g$ is not constant: we use that $S \setminus s $ is an affine curve.) We have an embedding of fields $r^*:  F(t) \hookrightarrow F(S)$. Now the lemma follows after the tensor multiplication of a sequence
$$
0 \lrto F(t) \lrto F((t))  \oplus \left(F(t) \otimes_{\Q} \dr     \right)  \lrto F((t)) \otimes_{\Q}  \dr
$$
to the field $F(S)$ over the field $F(t)$.
 \end{proof}

\medskip

We  suppose now that that there is an element $(0 \oplus y) \in \Ker \theta $,
where
\begin{equation}  \label{cond-y}
y \in \prod\limits_{D \nsubset C, \, f(D)= \Spec \dz} \left( \hat{\ff}_D \hat{\otimes}_{\Q} \dr   \right)
\end{equation}
We will prove that $y = 0$. Hence we will obtain that the map $\theta$ is injective.

We have the following decomposition
\begin{equation}  \label{sum_y}
(0 \oplus y)= y_{01} + y_{02} + y_{\infty} \, \mbox{,}
\end{equation}
where $y_{01}= \prod\limits_D y_{01,D} \in \mathop{{\prod}'}\limits_{D \subset X} K_D(\ff)$,
$
y_{02} = \prod\limits_x y_{02,x} \in \mathop{{\prod}'}\limits_{x \in X} K_x(\ff)
$,
and
$y_{\infty} \in \ff_{\eta} \otimes_{\Q} \dr$. We consider two cases.

\medskip

{\em Case $1$.}  If $y_{\infty} =0$, then we have that $y_{01, C_i} =0$ for any $1 \le i \le w$, since the image of an element $y_{01, C_i} =0$ in
$K_{C_i}(\ff) \hat{\otimes}_{\Q} \dr$ will be equal zero because of condition~\eqref{cond-y} on~$y$.
Hence we obtain that $y_{02,x} =0 $ for any closed point $x \in C_i$, since the projection of the element $(0 \oplus y)$ to the group $K_{x,C_i}(\ff)$ equals zero. By considering a fibre of the map $f$ which contains a previous  point $x$ we obtain that  $y_{01, D}=0$ when $f (D) \ne \Spec  \dz$, since  the projection of the element $(0 \oplus y)$ to the group $K_{x,D}(\ff)$ equals zero. Now for any closed point $x'$ on a previous $D$ we obtain that $y_{02, x'}=0$. Now, by considering an integral one-dimensional closed subscheme $D'$ with $f(D')= \dz$ and $x' \in D'$ we have $y_{01,D'}=0$. Hence, by formula~\eqref{sum_y}, $y=0$. Case~$1$ is considered.

\medskip

{\em Case $2$.} If $y_{\infty} \ne 0$, then we have that the image of the element $y_{\infty}$ in $K_{C_i}(\ff) \hat{\otimes}_{\Q} \dr$ (for $1 \le i \le w$)
equals the image of an element $- y_{01,C_i}$ because of condition~\eqref{cond-y} on~$y$. Therefore, by Lemma~\ref{Inters} applied to the curve $X_{\Q}$, we have
that $y_{\infty} = - y_{01,C_i} \in \ff_{\eta}$. Hence, by similar arguments as in the previous case, we obtain that $y_{02,x}=- y_{01,C_i} $ for any closed point $x \in C_i$. Therefore $y_{01, D}= - y_{02,x}$ for any one-dimensional integral closed subscheme $D$ with $f(D) \ne \Spec \dz$ and
$D$ contains a previous point $x$. Now for any closed point $x'$ on a previous $D$ we obtain that $y_{02, x'}= - y_{01,D}$. Finally,
for any integral one-dimensional closed subscheme $D'$ with $f(D')= \dz$ and $x' \in D'$ we have $y_{01, D'}= - y_{02, x'}$. Thus,
$y_{01,D'}= - y_{\infty} \in \ff_{\eta}$. Therefore, by formula~\eqref{sum_y}, we obtain $y =0$.
Case~$2$ is considered.

\medskip

Thus we have proved that the map $\theta$ is injective.

\medskip
We want to prove that
\begin{equation}   \label{E-form}
E = \prod\limits_{D \subset X, D \nsubset C}  \hat{\ff}_D
+ \prod\limits_{x \in U} \hat{\ff}_x \, \mbox{.}
\end{equation}
It will gives the first non-zero term of exact sequence~\eqref{main-seq} from Theorem~\ref{main-Th}.

We have the following commutative diagram:
\begin{equation}   \label{diagr}
\xymatrix{
&
 \prod\limits_{D \subset X, D \nsubset C}  \hat{\ff}_D
+ \prod\limits_{x \in U} \hat{\ff}_x  \,
\ar[dl]_-{\alpha}
\ar@{^{(}->}[d]
\ar@{^{(}->}[r]
&
\da_X^{\rm ar}(\ff)  \\
G &
E \, \ar@{_{(}->}[l]_{\theta}
\ar@{^{(}->}[ur]
 & \\
}
\end{equation}
where (we recall): the summand   $\prod\limits_{D \subset X, D \nsubset C}  \hat{\ff}_D$ is mapped in the both summands of the group $\da_X^{\rm ar} (\ff)$, see formula~\eqref{ar_adel}.

From this diagram and formula~\eqref{form-G} it follows that we prove  equality~\eqref{E-form} if we prove
 that
the group $\Image \alpha$ contains a subgroup
\begin{equation}  \label{group-H}
H = (\tau - \gamma)(A_C(\ff)) \cap \Image \theta \mbox{.}
\end{equation}
We will prove it now. We suppose that
\begin{equation}  \label{H-form}
H \, \ni \,  a = \theta(b)  \, \mbox{.}
\end{equation}
We have that $b \in \da_{X, 01}^{\rm ar} (\ff) + \da_{X, 02}^{\rm ar}(\ff)$.
 Therefore
 $$
 b = b_{01}  + b_{02} + b_{\infty}   \, \mbox{,}
 $$
 where
 $b_{01}=\prod\limits_D b_{01,D} \in \mathop{{\prod}'}\limits_{D \subset X} K_D(\ff)$,
$
b_{02} = \prod\limits_x b_{02,x} \in \mathop{{\prod}'}\limits_{x \in X} K_x(\ff)
$,
and
$b_{\infty} \in \ff_{\eta} \otimes_{\Q} \dr$.
 We consider again two cases.

 \medskip

{\em Case $1$.} Suppose that there is an integer $1 \le i \le w $ such that an element  ${b_{01, C_i}=0}$. Therefore, since by formula~\eqref{def-E}   the image of the element $b$ in the group
$K_{C_i}(\ff)  \hat{\otimes}_{\Q}  \dr$ equals zero, we have that $b_{\infty}=0$. This implies (again by formula~\eqref{def-E})
that $b_{01, C_j}=0$ for any integer $1 \le j \le w$.
For any closed point $x \in C_j$ the image of the element $b$  in the group $K_{x,C_j}(\ff)$ equals zero (by formula~\eqref{def-E}). Therefore from $b_{01, C_j}=0$ we have $b_{02,x}=0$.
Thus we obtained that
$$
b = \left( \prod\limits_{D \nsubset C} b_{01,D}  \right) +  \left(  \prod\limits_{x \in U}  b_{02,x}                 \right)   \, \mbox.
$$
Suppose that a one-dimensional integral closed subscheme $D \subset X$ is not contained in a fibre of the map $f$. Then the image of
the element $b_{01, D}$ in the group $K_D(\ff) \hat{\otimes}_{\Q}  \dr $ belongs to the subgroup $ \hat{\ff}_{D}  \hat{\otimes}_{\Q}  \dr $,
because of formula~\eqref{def-E} and $b_{\infty} =0$. Hence we have that $b_{01,D} \in \hat{\ff}_{D}$.
Now suppose that a one-dimensional integral closed subscheme $D \subset X$ is contained in a fibre of the map $f$.
Then there is a closed point $x \in D \cap C$. From formula~\eqref{def-E}  we have that the image of the element $b$ in the group $K_{x,D}(\ff)$
belongs to the subgroup $\oo_{K_{x,D}}(\ff)$.
Since $b_{02,x}=0$, we obtain that $b_{01,D}  \in \hat{\ff}_D$.
Hence, for any closed point $x \in U$ we obtain $b_{02,x} \in \hat{\ff}_x$, since for any one-dimensional integral closed subscheme $D \ni x$ we have from $\theta(b) \in H$  that
$b_{02,x} + b_{01,D} =0$
inside the group $K_{x,D}(\ff)$ (i.e. the image of the element $b_{02,x}$ belongs to the subgroup $\oo_{K_{x,D}}(\ff)$). Thus, we proved that
$$
b \in \prod\limits_{D \subset X, D \nsubset C}  \hat{\ff}_D
+ \prod\limits_{x \in U} \hat{\ff}_x   \, \mbox{.}
$$
This implies that $a= \theta(b)= \alpha(b)$ (see formula~\eqref{H-form}).
Case $1$ is considered.

\medskip

{\em Case $2$.} Now suppose that for any integer $1 \le i \le w$ we have $b_{01, C_i} \ne 0$.
Since for any integer $1 \le i \le w$  the image of the element $b$ in the group $K_{C_i}(\ff) \hat{\otimes}_{\Q}  \dr$ equals zero (see formula~\eqref{def-E}),
we have that $b_{01,C_i} + b_{\infty} =0$
inside the group $K_{C_i}(\ff) \hat{\otimes}_{\Q}  \dr$. Therefore, by Lemma~\ref{Inters} applied to the curve $X_{\Q}$, we obtain that
$$
b_{01, C_i} = - b_{\infty} \, \in  \, \ff_{\eta}  \, \mbox{.}
$$
For any closed point $x \in C_i$ we have $b_{02,x}= -b_{01, C_i}$, since by formula~\eqref{def-E} the image of the element $b$ in the group
$K_{x,C_i}(\ff)$ equals zero. Thus we have
\begin{equation}  \label{on_U}
  \theta \left(  \prod_D b_{01, D}  + \prod_x b_{02,x} + b_{\infty}   \right)
\, \in \,  (\tau - \gamma)(A_C(\ff))
  \, \mbox{,}
\end{equation}
where
 for any $1 \le i \le w$ and for any $x \in C_i$  we have $b_{01, C_i} =- b_{02,x}= -b_{\infty} \in \ff_{\eta}$.
 From formula~\eqref{on_U} we obtain an equality in the group $\da_U(\ff)$, which is the Parshin-Beilinson adeic group of $\ff$ on $U$:
\begin{equation}   \label{1-coc}
\prod_{D \nsubset C} b_{01,D}  + \prod_{x \in U} b_{02,x} =0  \, \mbox{.}
\end{equation}
Since the scheme $U$
is affine, we have $H^1(U, \oo_U)=0$. Hence and from formula~\eqref{1-coc} we obtain that a $1$-cocycle
$\left( \prod\limits_{D \nsubset C} b_{01,D}, - \prod\limits_{x \in U} b_{02,x}, 0  \right)$
 in the adelic complex on $U$ is a $1$-coboundary. This means that there are elements $g \in \ff_{\eta}$, $e_{2,x} \in \hat{\ff}_x$ for any closed point $x \in U$, and $e_{1,D} \in \hat{\ff}_D$ for any integral $1$-dimensional closed subscheme $D$ on $U$ such that
 \begin{equation}  \label{Bud-Gor}
 b_{01, D}= e_{1,D} -g  \quad \mbox{,}  \qquad b_{02,x}= e_{2,x} +g  \quad \mbox{,}  \qquad e_{1,D} + e_{2,x} =0  \, \mbox{,}
 \end{equation}
 where the first equality is in the group $K_D(\ff)$, the second equality is in the group $K_x(\ff)$, and the third equality is in the group
 $\oo_{x,D}(\ff)$.
 Thus, we obtain the following equality:
 \begin{equation}  \label{b-e}
 b = \left(\prod_D b_{01,D} \right) + \left( \prod_{x} b_{02,x} \right)  + b_{\infty} = \left( \prod_{D} e_{01,D} \right)  +
 \left(\prod_{x} e_{02,x} \right)  + e_{\infty}  \, \mbox{,}
 \end{equation}
 where $e_{01,D} = e_{1, D} = b_{01, D} +g  \in \hat{\ff}_D$  for $D  \not{\hspace{-0.1cm} \subset} C$, $e_{02,x}= e_{2,x} = b_{02,x} -g \,\in \hat{\ff}_x$  for $x \in U$, and
 $$e_{\infty}= - e_{01,C_i}= e_{02,x}= b_{\infty} -g = -b_{01,C_i} -g
 = b_{02,x} - g
   \; \, \in  \; \ff_{\eta}$$ for $x \in C_i$ and $1 \le i \le w$.
 \begin{nt}{ \em
 We did not use the last equality in formula~\eqref{Bud-Gor}. This equality means that
 $$
 \prod_{D \subset U} e_{01,D} = - \prod_{x \in U} e_{02,x}   \; \, \in \; \, \da_{1, U}(\ff)  \cap \da_{2, U}(\ff)  \, \mbox{,}
 $$
 where the subgroups $\da_{1, U}(\ff)$ and  $\da_{2, U}(\ff)$ were defined by formula~\eqref{ad-subgr}, and the intersection is taken in the group
 $\da_{U}(\ff)$. This intersection contains the subgroup $H^0(U, \ff \mid_U)$, but, as it was shown in~\cite[\S~6]{BG},  it is not equal to $H^0(U, \ff \mid_U)$. (More exactly, in~\cite[\S~6]{BG} a corresponding  example  was constructed when $U$ is an affine regular surface over a countable field, but the same reasonings work also in our case.) }
 \end{nt}
Now from formulas~\eqref{def-E} and~\eqref{b-e} we obtain that $e_{\infty} \in A_{U_{\Q}}(\ff)= H^0(U_{\Q}, \ff_{\Q}  \mid_{U_{\Q}} )$.
For any $D  \not{\hspace{-0.1cm} \subset} C$ we have $e_{01,D}  \in \hat{\ff}_D$. Therefore we obtain
from formula~\eqref{def-E} that
for any $x \in C_i$ with $1 \le i \le w$
 the image of an element $e_{02,x}$ in the group $K_{x,D}(\ff)$ belongs to the subgroup
$\oo_{K_{x,D}}(\ff)$.  Using $e_{\infty} = e_{02,x}$ we obtain
$$
e_{\infty}= - e_{01,C_i}= e_{02,x}  \; \in \; H^0(U, \ff \mid_U)  \, \mbox{.}
$$
Thus we have
$$
b =  \left( \prod_{D} e_{01,D} \right)  +
 \left(\prod_{x} e_{02,x} \right)  + e_{\infty} =
 \left( \prod_{D \nsubset C} f_{01,D} \right) + \left(  \prod_{x \in U}  f_{02,x}                   \right)   \, \mbox{,}
$$
where $f_{02,x} = e_{02,x} - e_{\infty}  \, \in \hat{\ff}_x$ for $x \in U$,
and $f_{01, D} = e_{01,D} + e_{\infty}  \, \in \hat{\ff}_D$ for $D  \not{\hspace{-0.1cm} \subset} C$.
This implies that $a= \theta(b)= \alpha(b)$ (see formula~\eqref{H-form}).
Case $2$ is considered.

\medskip

Thus we have proved equality~\eqref{E-form}.

This gives the first non-zero term of exact sequence~\eqref{main-seq} from Theorem~\ref{main-Th}.

\subsection{Last non-zero term in exact sequence~\eqref{main-seq}}  \label{last_non-zero_term}
In this section we obtain the last non-zero term in exact sequence~\eqref{main-seq}.
To do it,  we will make the following calculations:
\begin{multline*}
\frac{\Phi}{
\left( \mathop{{\prod}'}\limits_{x \in D, D \nsubset C} \oo_{K_{x,D}}(\ff) \right)
\oplus
\left(
\prod\limits_{D \nsubset C, \, f(D)= \Spec \dz} \left( \hat{\ff}_D \hat{\otimes}_{\Q} \dr   \right)
\right)} \simeq  \\ \simeq
\frac{\da_X^{\rm ar}(\ff)}{\da_{X, 01}^{\rm ar}(\ff)  + \da_{X, 02}^{\rm ar}(\ff)
+
\left(
\left( \mathop{{\prod}'}\limits_{x \in D, D \nsubset C} \oo_{K_{x,D}}(\ff) \right)
\oplus
\left(
\prod\limits_{D \nsubset C, \, f(D)= \Spec \dz} \left( \hat{\ff}_D \hat{\otimes}_{\Q} \dr   \right)
\right)
\right) } \simeq
\\ \simeq
\frac{\Phi_1 \oplus \Phi_2}{\beta(\Phi_3)}  \, \mbox{,}
\end{multline*}
where
\begin{gather*} \Phi_1 =
\frac{\da_X(\ff)}{\da_{X, 02}(\ff) + \mathop{{\prod}'}\limits_{D, \, f(D) \ne \Spec \dz} K_D(\ff) + \mathop{{\prod}'}\limits_{x \in D, D \nsubset C} \oo_{K_{x,D}}(\ff)  }  \, \mbox{,}
\\
\Phi_2 =
\frac{\da_{X_{\Q}}(\ff) \otimes_{\Q} \dr}
{  \left( \ff_{\eta} \otimes_{\Q} \dr \right) +
\prod\limits_{D \nsubset C, \, f(D)= \Spec \dz} \left( \hat{\ff}_D \hat{\otimes}_{\Q} \dr    \right)}
\;
\mbox{,} \qquad
\Phi_3 = \mathop{{\prod}'}\limits_{D, \, f(D) = \Spec \dz} K_D(\ff) \, \mbox{,}
\end{gather*}
and $\beta$ is a natural map from the group $\Phi_3$ to the group $\Phi_1 \oplus \Phi_2$ (to the both parts of this direct sum).

\medskip

Now we prove that
\begin{equation}  \label{eq-zero}
\frac{\da_X(\ff)}{\da_{X, 02}(\ff) + \mathop{{\prod}'}\limits_{D, \, f(D) \ne \Spec \dz} K_D(\ff) + \mathop{{\prod}'}\limits_{x \in D, D \nsubset C} \oo_{K_{x,D}}(\ff) + \Phi_3 } \; \simeq  \; 0 \, \mbox{.}
\end{equation}
Indeed, from~\cite[Theorem~1]{Osip1} we immediately obtain that the left hand side of expression from formula~\eqref{eq-zero} equals to
\begin{equation}  \label{K_x_C}
\frac{ \prod\limits_{ 1 \le i \le w }  \mathop{{\prod}'}\limits_{x \in C_i}   K_{x, C_i}(\ff))}{
 \prod\limits_{1 \le i \le w} K_{C_i}(\ff)   + \mathop{{\prod}'}\limits_{x \in C} B_{x,C}(\ff)   }   \, \mbox{,}
\end{equation}
which equals zero, because it is isomorphic to  the group $\varinjlim\limits _{n} \varprojlim\limits_{m < n}
 H^1 \left( X, \ff_{nm}\right)    $ via inductive and projective limits of  adelic complexes
 of coherent sheaves \linebreak ${\ff_{nm} = \ff \otimes_{\oo_X} \left( \oo_X(nC)/   \oo_X(mC) \right)}$ on the scheme $Y_{n-m}= (C, \oo_X / J_C^{n-m})$
 with to\-po\-lo\-gi\-cal space $C$ and the structure sheaf $\oo_X / J_C^{n-m}$, where $J_C$ is the ideal sheaf of the subscheme $C$ on $X$, and
 $H^1 \left( X, \ff_{nm}\right) = H^1 \left( Y_{n-m}, \ff_{nm}\right) =0$, because $Y_{n-m}$ is an affine scheme (see similar reasonings also
 in~\cite[Lemma~7]{OsipPar2})\footnote{We recall that $\oo_X(nC)$ is a reflexive
 torsion free  coherent subsheaf of the constant sheaf of the field of rational functions on $X$, and this subsheaf consists of elements of $j_* \oo_U$ which have discrete valuations given by subschemes $C_i$ (where $1 \le i \le w$) greater or equal to $-n$, see also the beginning of Section~3.2.1 from~\cite{Osip1}.}.
 \begin{nt} {\em
 We note that reduction of formula~\eqref{eq-zero} to equality to zero of formula~\eqref{K_x_C} follows easily also from $H^2(X, j_* j^* \ff) =0$ and the adelic complex for the sheaf $j_* j^* \ff$ on~$X$. }
 \end{nt}

\medskip

Now we claim that
\begin{equation}  \label{psi}
\frac{\Phi_1 \oplus \Phi_2}{\beta(\Phi_3)} \simeq
\frac{\Phi_2
}{
\lambda \left(\Phi_3 \cap \left( \da_{X, 02}(\ff) + \mathop{{\prod}'}\limits_{D, \, f(D) \ne \Spec \dz} K_D(\ff) + \mathop{{\prod}'}\limits_{x \in D, D \nsubset C} \oo_{K_{x,D}}(\ff)       \right)   \right)
}  \, \mbox{,}
\end{equation}
where the intersection is taken inside the group $\da_X(\ff)$, and the map $\lambda$ is a natural map from the group $\Phi_3$
to the group $\Phi_2$.

 We construct the map $\psi$ from the group in the left hand side to the group in the  right hand side of
formula~\eqref{psi} in the following way. Let an element $w$ be from the group $(\Phi_1 \oplus \Phi_2)/\beta(\Phi_3)$. We will define an element $\psi(w)$.
Let $x \oplus y$ be a lift of the element $w$, where $x \in \Phi_1$ and $y \in \Phi_2$. Let $\tilde{x} \in \da_{X}(\ff)$ be a lift of the element $x$. By formula~\eqref{eq-zero} there is an element $g \in \Phi_3$ such that
\begin{equation}   \label{cond_g}
g + \tilde{x}  \;  \in   \;  \da_{X, 02}(\ff) + \mathop{{\prod}'}\limits_{D, \, f(D) \ne \Spec \dz} K_D(\ff) + \mathop{{\prod}'}\limits_{x \in D, D \nsubset C} \oo_{K_{x,D}}(\ff)  \, \mbox{.}
\end{equation}
Now, by definition,  $\psi(w) =  y + \lambda(g) $.
Clearly, the map $\psi$ is well-defined, since it is clear that $\psi$ does not depend on the choice of the lifts of elements $w$ and $x$,
and if $g_1 $ and $g_2$ are two elements form the group $\Phi_3$ with condition as in formula~\eqref{cond_g}, then
$$
g_1 - g_2  \; \in \; \Phi_3 \cap \left( \da_{X, 02}(\ff) + \mathop{{\prod}'}\limits_{D, \, f(D) \ne \Spec \dz} K_D(\ff) + \mathop{{\prod}'}\limits_{x \in D, D \nsubset C} \oo_{K_{x,D}}(\ff)       \right)  \, \mbox{.}
$$
It is also clear that $\psi$
is a surjective map. Moreover, it is easy to see that $\psi$ is an injective map. Thus $\psi$
is an isomorphism.

\medskip

Now we will prove that
\begin{equation}  \label{lambda}
\lambda \left(\Phi_3 \cap \left( \da_{X, 02}(\ff) + \mathop{{\prod}'}\limits_{D, \, f(D) \ne \Spec \dz} K_D(\ff) + \mathop{{\prod}'}\limits_{x \in D, D \nsubset C} \oo_{K_{x,D}}(\ff)    \right)   \right)  = \lambda (A_C(\ff))  \, \mbox{.}
\end{equation}
Indeed, we suppose that
\begin{equation}  \label{eq-c}
c_{01}' = c_{02} + c_{01}'' + c_{12}   \; \in \;  \Phi_3 \cap \left( \da_{X, 02}(\ff) + \mathop{{\prod}'}\limits_{D, \, f(D) \ne \Spec \dz} K_D(\ff) + \mathop{{\prod}'}\limits_{x \in D, D \nsubset C} \oo_{K_{x,D}}(\ff)    \right) \, \mbox{,}
\end{equation}
where $c_{01}' = \prod\limits_D c_{01,D}  \in \Phi_3$, $c_{01}'' = \prod\limits_D c_{01,D}  \in \mathop{{\prod}'}\limits_{D, \, f(D)  \ne \Spec \dz} K_D(\ff)$, ${c_{02}= \prod\limits_x c_{02,x}  \in \da_{X, 02}(\ff)}$, ${c_{12}= \prod\limits_{x \in D} c_{12,x,D}  \in
\mathop{{\prod}'}\limits_{x \in D, \, D \nsubset C} \oo_{K_{x,D}}(\ff)}$.

An equality
 \begin{equation}   \label{eq-c-expl}
 {c_{02} + c_{01}'' -c_{01}' + c_{12} =0}
 \end{equation}
 restricted to the open affine subset $U$ defines a $1$-cocycle
 $(c_{01}'' -c_{01}', - c_{02}, c_{12})$
 in the adelic complex of the sheaf $\ff$ on~$U$. Since $H^1(U, \oo_U)=0$, this $1$-cocycle is a $1$-coboundary.
This means that there exist  elements $d_{1,D} \in \hat{\ff}_D$ for any $1$-dimensional integral closed subscheme  $D \subset X$  such that $D \not \hspace{-0.1cm} \subset C$,
elements $d_{2,x} \in \hat{\ff}_x$ for any $x \in U$, and an element $h \in \ff_{\eta}$ such that
\begin{gather*}
d_{1,D} +h = c_{01,D} \qquad \mbox{for any $D$ with $f(D) \ne \Spec \dz$,} \\
d_{1,D} -h = c_{01,D} \qquad \mbox{for any $D$ with $D \not \hspace{-0.1cm} \subset C$ and $f(D) = \Spec \dz$,} \\
d_{2,x} - h= c_{02,x} \qquad  \mbox{for any $x \in U$.}
\end{gather*}
 We define also  elements  $d_{01,D}=d_{1,D}$ and $d_{02,x}=d_{2,x}$ for $D$ and $x$ as above, and
 \begin{gather*}
 d_{01,C_i}= c_{01, C_i} + h \qquad \mbox{for any $1 \le i \le w$,} \\
d_{02,x} = c_{02,x} +h \qquad \mbox{for any $x \in C$,} \\
d_{12,x,D}= c_{12,x,D}  \in \oo_{K_{x,D}}(\ff) \qquad \mbox{for any $x \in D$ with $D \not \hspace{-0,1cm} \subset C$.}
\end{gather*}
Then we have for elements ``$d$'' the same equation as for elements ``$c$'' in formula~\eqref{eq-c-expl}. Hence and from above conditions for elements ``$d$'' we obtain
 that $\prod\limits_{1 \le i \le w} d_{01, C_i}  \in A_C(\ff)$.  Therefore the group which is intersection in formula~\eqref{eq-c} is contained in the group
 $$A_C(\ff) + \prod\limits_{D \nsubset C, \, f(D) = \Spec \dz} \hat{\ff}_D  + \quad (\mbox{image of} \quad \ff_{\eta} \quad \mbox{in} \quad \Phi_3)  \mbox{.}$$
Besides, the group $A_C(\ff)$ is contained in the group which is intersection in formula~\eqref{eq-c}.
Indeed, it is evident that $A_C(\ff)  \subset \Phi_3$, and we have also
$$A_C(\ff)  \subset \da_{X, 02}(\ff)  + \mathop{{\prod}'}\limits_{x \in D, D \nsubset C} \oo_{K_{x,D}}(\ff)   $$
by means of the map $\tau$ (see formulas~\eqref{second-line} and~\eqref{B-form}).
Hence, bearing in mind definition of the group $\Phi_2$, we obtain equality~\eqref{lambda}.

Thus, from formulas~\eqref{psi} and~\eqref{lambda} we have that
$$
\frac{\Phi_1 \oplus \Phi_2}{\beta(\Phi_3)} \simeq
\frac{\Phi_2
}{\lambda(A_C(\ff))}  \, \mbox{.}
$$

From calculation of adelic quotient group on the algebraic curve $X_{\Q}$, namely from~\cite[Remark~1]{Osip1} applied to the algebraic curve $X_{\Q}$
and the open affine subset $U_{\Q}$, we obtain
\begin{equation}  \label{phi_2}
\Phi_2   \;
\simeq
\;
\frac{ \prod\limits_{1 \le i \le w} \left( K_{C_i}(\ff) \hat{\otimes}_{\Q} \dr \right) }{
A_{U_{\Q}}(\ff) \otimes_{\Q}  \dr
} \, \mbox{.}
\end{equation}
Hence we obtain that
$$
\frac{\Phi_1 \oplus \Phi_2}{\beta(\Phi_3)} \simeq \frac{ \prod\limits_{1 \le i \le w} \left( K_{C_i}(\ff) \hat{\otimes}_{\Q} \dr \right) }{
A_C(\ff) + \left( A_{U_{\Q}}(\ff) \otimes_{\Q}  \dr  \right)
} \, \mbox{.}
$$

This gives the last non-zero term of exact sequence~\eqref{main-seq} from Theorem~\ref{main-Th}.
Thus we have proved this theorem.

\begin{nt} \label{C-curve}{\em
From the proof that the group in formula~\eqref{K_x_C} equals zero and from
construction of the isomorphism $\psi$ in formula~\eqref{psi}
it follows that the image of the group $\prod\limits_{1 \le i \le w}
\mathop{{\prod}'}\limits_{x \in C_i} \oo_{K_{x,C_i}}(\ff)$ in the group $\Phi$ is mapped  to the image of the group $\prod\limits_{1 \le i \le w} \hat{\ff}_{C_i}$ in the group
$
\frac{ \prod\limits_{1 \le i \le w} \left( K_{C_i}(\ff) \hat{\otimes}_{\Q} \dr \right) }{
A_C(\ff) + \left( A_{U_{\Q}}(\ff) \otimes_{\Q}  \dr  \right)
} $
in exact sequence~\eqref{main-seq}.
}
\end{nt}

\section{Connection with the first cohomology groups}  \label{appl}
In this section we will say more on a group which is the last non-zero term in exact sequence~\eqref{main-seq} from Theorem~\ref{main-Th}.
In particular, we relate it with the first cohomology groups of sheaves $\ff(nC)$ on $X$, where $\ff(nc)= \ff \otimes_{\oo_X} \oo_X(nC)$.
We demand also additionally that $C$ is an ample Cariter divisor on $X$.

For any integer $m \ge 0$ we consider a quotient group of
$
\frac{ \prod\limits_{1 \le i \le w} \left( K_{C_i}(\ff) \hat{\otimes}_{\Q} \dr \right) }{
A_C(\ff) + \left( A_{U_{\Q}}(\ff) \otimes_{\Q}  \dr  \right)}:
$
$$
\Theta_m(\ff) = \frac{ \prod\limits_{1 \le i \le w} \left( K_{C_i}(\ff) \hat{\otimes}_{\Q} \dr \right) }{
A_C(\ff) + \left( A_{U_{\Q}}(\ff) \otimes_{\Q}  \dr \right) + \left( \prod\limits_{1 \le i \le w} \widehat{\ff(-mC)}_{C_i}
\hat{\otimes}_{\Q}  \dr
 \right)}   \mbox{.}
$$

\begin{prop}  \label{theta}
We have the following properties.
\begin{enumerate}
\item
For any integer $m \ge 0$
the group
$$\Theta_m(\ff)
\simeq H^1(X, \ff(-mC))  \otimes_{\dz} \mathbb{T} \simeq \mathbb{T}^{{\rm \, rank}(H^1(X, \, \ff(-mC) ))}
$$
 is a finite direct product of copies of  $\mathbb{T}$ (we recall that $\mathbb{T} \simeq \dr/\dz$).
\item There is $m_0 \ge 0$ (which depends on $\ff$) such that for any integer $m \ge m_0$ we have an exact sequence
$$
 0 \lrto  \mathbb{T}^{\, r l} \lrto  \Theta_{m+1}(\ff) \lrto  \Theta_m(\ff)  \lrto 0  \, \mbox{,}
$$
where $r$ is the rank of the locally free sheaf $\ff$, and $l$ is the degree of the finite morphism $f \mid_C \, : \, C  \to \Spec \dz$.
\end{enumerate}
\end{prop}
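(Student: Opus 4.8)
The plan is to prove both parts by reducing $\Theta_m(\ff)$ to the finitely generated $\dz$-module $G_m := H^1(X,\ff(-mC))$ tensored with $\mathbb{T}$, and then to derive the exact sequence of part 2 from a short exact sequence of sheaves on $X$. Throughout I would write $G_m = H^1(X,\ff(-mC))$ and use that $\mathbb{T}$ is divisible.

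For part 1, I would first clear the two ``$\dr$-subspaces'' from the denominator. Quotienting the numerator $\prod_i(K_{C_i}(\ff)\hat{\otimes}_{\Q}\dr)$ by $M_m := \prod_i\widehat{\ff(-mC)}_{C_i}\hat{\otimes}_{\Q}\dr$ and by $A_{U_{\Q}}(\ff)\otimes_{\Q}\dr$, and invoking the generic-fibre computation $\Phi_2$ of~\eqref{phi_2} (i.e. \cite[Remark~1]{Osip1}) applied now to the twisted sheaf $\ff_{\Q}(-mC)$ on $X_{\Q}$, I expect the identification
$$\frac{\prod_i(K_{C_i}(\ff)\hat{\otimes}_{\Q}\dr)}{(A_{U_{\Q}}(\ff)\otimes_{\Q}\dr)+M_m}\;\simeq\;H^1(X_{\Q},\ff_{\Q}(-mC))\otimes_{\Q}\dr\;\simeq\;G_m\otimes_{\dz}\dr,$$
the last isomorphism being flat base change $H^1(X,\ff(-mC))\otimes_{\dz}\Q\simeq H^1(X_{\Q},\ff_{\Q}(-mC))$ together with finite-dimensionality (so that here $\hat{\otimes}_{\Q}\dr=\otimes_{\Q}\dr$). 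Consequently $\Theta_m(\ff)\simeq(G_m\otimes_{\dz}\dr)/\Lambda_m$, where $\Lambda_m$ is the image of $A_C(\ff)$ under the composite $A_C(\ff)\hookrightarrow\prod_i K_{C_i}(\ff)\twoheadrightarrow G_m\otimes_{\dz}\dr$, which lands in the integral copy $G_m=G_m\otimes 1$.

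The main obstacle is then to show that $\Lambda_m$ is \emph{exactly} the integral lattice $\mathrm{Im}(G_m\to G_m\otimes_{\dz}\dr)$; granting this, $\Theta_m(\ff)\simeq(G_m\otimes_{\dz}\dr)/G_m\simeq G_m\otimes_{\dz}\mathbb{T}$, and since $G_m$ is a finitely generated $\dz$-module and $\mathbb{T}$ is divisible its torsion dies under $\otimes_{\dz}\mathbb{T}$, giving $G_m\otimes_{\dz}\mathbb{T}\simeq\mathbb{T}^{\rank G_m}$ and finishing part 1. To identify $\Lambda_m$ I would use the description $A_C(\ff)\simeq\varinjlim_n\varprojlim_{k<n}H^0(X,\ff\otimes_{\oo_X}(\oo_X(nC)/\oo_X(kC)))$ of \cite[Proposition~1]{Osip1} together with the coboundary maps $H^0(X,\ff\otimes(\oo_X(nC)/\oo_X(-mC)))\to H^1(X,\ff(-mC))$ attached to $0\to\ff(-mC)\to\ff(nC)\to\ff\otimes(\oo_X(nC)/\oo_X(-mC))\to 0$, which assemble into a map $A_C(\ff)\to G_m$. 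Because $C$ is ample, Serre vanishing gives $H^1(X,\ff(nC))=0$ for $n\gg 0$, so this coboundary is surjective; hence $A_C(\ff)\twoheadrightarrow G_m$, and after $\otimes_{\dz}\Q$ it must match the generic-fibre adelic projection $\prod_i K_{C_i}(\ff)\to G_m\otimes_{\dz}\Q$, the relevant image-compatibility being of the type recorded in Remark~\ref{C-curve}. The delicate point is precisely this compatibility of the arithmetic coboundary with the generic-fibre adelic projection under the base change $\dz\to\Q$; verifying it, so that $\Lambda_m$ is neither larger nor smaller than the full lattice, is where the real work lies.

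For part 2 I would argue by a short exact sequence of sheaves. Since $C$ is an ample Cartier divisor, tensoring $0\to\oo_X(-C)\to\oo_X\to\oo_C\to 0$ by the locally free sheaf $\ff(-mC)$ gives $0\to\ff(-(m+1)C)\to\ff(-mC)\to\g_m\to 0$ with $\g_m:=\ff(-mC)\otimes_{\oo_X}\oo_C$ locally free of rank $r$ on $C$. As $f|_C:C\to\Spec\dz$ is finite, $C$ is affine, so $H^1(C,\g_m)=0$; and as $C$ has no vertical components, $\oo_C$ is torsion-free, hence finite flat of rank $l$ over $\dz$, so $H^0(C,\g_m)\simeq\dz^{rl}$. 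The long exact cohomology sequence then reads
$$H^0(X,\ff(-mC))\lrto\dz^{rl}\lrto G_{m+1}\lrto G_m\lrto 0.$$
Choosing $m_0$ so that $\deg(\ff_{\Q}(-mC))<0$ on every component of $X_{\Q}$ for $m\ge m_0$ forces $H^0(X,\ff(-mC))\hookrightarrow H^0(X_{\Q},\ff_{\Q}(-mC))=0$, whence a short exact sequence $0\to\dz^{rl}\to G_{m+1}\to G_m\to 0$ for $m\ge m_0$. Applying $-\otimes_{\dz}\mathbb{T}$ and part 1, and using that $\dz^{rl}$ is free (so $\mathrm{Tor}^{\dz}_1(\dz^{rl},\mathbb{T})=0$), the resulting six-term sequence shows that $\ker(\Theta_{m+1}(\ff)\to\Theta_m(\ff))$ is $\mathbb{T}^{rl}$ modulo the image of the finite group $\mathrm{Tor}^{\dz}_1(G_m,\mathbb{T})\simeq (G_m)_{\mathrm{tors}}$; since a quotient of $\mathbb{T}^{rl}$ by a finite subgroup is again isomorphic to $\mathbb{T}^{rl}$, this kernel is $\simeq\mathbb{T}^{rl}$, yielding the asserted exact sequence $0\to\mathbb{T}^{rl}\to\Theta_{m+1}(\ff)\to\Theta_m(\ff)\to 0$. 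I would flag this finite-subgroup subtlety explicitly, since it is the only place where part 2 is not completely formal.
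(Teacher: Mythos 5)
Your part~1 has a genuine gap, and you have located it yourself. Your route first identifies the ambient space with generic-fibre cohomology, $\prod_i\bigl(K_{C_i}(\ff)\hat{\otimes}_{\Q}\dr\bigr)/\bigl((A_{U_{\Q}}(\ff)\otimes_{\Q}\dr)+M_m\bigr)\simeq G_m\otimes_{\dz}\dr$ where $G_m=H^1(X,\ff(-mC))$ and $M_m=\prod_i\bigl(\widehat{\ff(-mC)}_{C_i}\hat{\otimes}_{\Q}\dr\bigr)$, and then must show that the image $\Lambda_m$ of $A_C(\ff)$ under this identification is \emph{exactly} the integral lattice $\mathrm{Im}(G_m\to G_m\otimes_{\dz}\dr)$. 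You propose to get a surjection $A_C(\ff)\twoheadrightarrow G_m$ from coboundary maps plus Serre vanishing, and then assert that after $\otimes_{\dz}\Q$ it ``must match'' the adelic projection --- but this compatibility is precisely what makes part~1 nontrivial, and you leave it unverified (by your own admission, it is ``where the real work lies''). Without it, the surjection $A_C(\ff)\to G_m$ and the inclusion $A_C(\ff)\hookrightarrow\prod_iK_{C_i}(\ff)$ give two a priori unrelated ways of placing $A_C(\ff)$ relative to $G_m\otimes_{\dz}\dr$, and nothing pins down $\Lambda_m$. The paper organizes the computation so that this issue never arises: it never identifies the ambient space with $H^1(X_{\Q},\cdot)\otimes\dr$ separately, but instead rewrites the whole quotient $\Theta_m(\ff)$ in terms of $A_C(\ff)$ itself, identifying $\prod_i\bigl(K_{C_i}(\ff)\hat{\otimes}_{\Q}\dr\bigr)$ modulo $M_m$ with $\bigl[A_C(\ff)/\varprojlim_{j<-m}H^0(X,\ff(-mC)/\ff(jC))\bigr]\otimes_{\dz}\dr$ \emph{compatibly} with the inclusion of $A_C(\ff)$ (this is formula~\eqref{integer-form}); the denominator is then tautologically the image of the integral group, and the single nontrivial input, $A_C(\ff)/\bigl(H^0(U,\ff\mid_U)+\varprojlim_{j<-m}H^0(X,\ff(-mC)/\ff(jC))\bigr)\simeq H^1(X,\ff(-mC))$, is the cited Theorem~9 of~\cite{Osi}. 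Closing your gap essentially amounts to reproving that theorem together with a base-change compatibility, so it is the crux of the argument, not a routine check.

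Your part~2, by contrast, is essentially the paper's own proof: the same twisting sequence $0\to\ff(-(m+1)C)\to\ff(-mC)\to\g_m\to0$, the same long exact cohomology sequence with $H^0(C,\g_m)\simeq\dz^{rl}$ and $H^1(C,\g_m)=0$, vanishing of $H^0(X,\ff(-mC))$ for $m\ge m_0$, and the lattice picture; in fact your explicit treatment of the torsion of $G_m$ (the kernel of $\Theta_{m+1}(\ff)\to\Theta_m(\ff)$ is $\mathbb{T}^{rl}$ modulo a finite subgroup, which is again $\mathbb{T}^{rl}$) is more careful than the paper's terse conclusion. One flaw there: your choice of $m_0$ via ``$\deg(\ff_{\Q}(-mC))<0$ forces $H^0=0$'' is wrong for rank $r>1$, since a vector bundle of negative degree can have sections (e.g.\ $\oo(1)\oplus\oo(-5)$ on $\mathbb{P}^1$). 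You need instead that the degrees of invertible subsheaves of $\ff_{\Q}$ are bounded above, or the paper's argument that the $H^0(X,\ff(-jC))$ form a decreasing chain of (saturated) subgroups of the finitely generated group $H^0(X,\ff)$ with trivial intersection. This point is minor and fixable; the unproved lattice identification in part~1 is not.
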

\begin{proof}
{\em 1.}
Since $C$ is an affine $1$-dimensional scheme and $f \mid_C \, : \, C  \to \Spec \dz$ is a flat morphism, we have an embedding of a free Abelian group
into a $\Q$-vector space:
\begin{multline*}
A_C(\ff)/ ( \varprojlim\limits_{j < -m} H^0(X, \, \ff(-mC)/ \ff(jC)) ) \simeq \varinjlim\limits_{n >  -m} H^0(X, \ff(nC)/ \ff(-mC))  \simeq \\ \simeq
\bigoplus\limits_{n > -m} H^0(X, \ff(nC)/ \ff((n-1)C))  \hookrightarrow   \bigoplus\limits_{n > -m} H^0(X, \ff(nC)/ \ff((n-1)C)) \otimes_{\dz} \Q
\simeq \\
\simeq
\left( \prod\limits_{1 \le i \le w} K_{C_i}(\ff)  \right) /
\left(
\prod\limits_{1 \le i \le w} \widehat{\ff(-mC)}_{C_i}
\right)
\, \mbox{.}
\end{multline*}
(We note that the decomposition into the direct sum in this formula is not canonical, but we fix it.)

Besides, we have an embedding of a free Abelian group into a $\Q$-vector space:
$$
H^0 (U, \ff \mid_U) \simeq
\varinjlim\limits_n H^0(X, \ff(nC))  \hookrightarrow
\varinjlim\limits_n H^0(X, \ff(nC))  \otimes_{\dz} \Q \simeq
H^0(U_{\Q}, \ff_{\Q}  \mid_{U_{\Q}}) \simeq A_{U_{\Q}}(\ff) \mbox{.}
$$
Hence we obtain that
\begin{equation}  \label{integer-form}
\Theta_m(\ff) \simeq \frac{\frac{A_C(\ff)}{H^0(U, \, \ff \mid_U) + \varprojlim\limits_{j < -m} H^0(X, \, \ff(-mC)/ \ff(j)) } \otimes_{\dz} \dr}{A_C(\ff)}
\end{equation}

By~\cite[Theorem~9]{Osi}   for any integer $m$ we have
$$
\frac{A_C(\ff)}{H^0(U, \, \ff \mid_U) + \varprojlim\limits_{j < -m} H^0(X, \, \ff(-mC)/ \ff(jC)) } \simeq H^1(X, \ff(-mC)) \, \mbox{,}
$$
and we know that these groups are finitely generated Abelian groups (the last also follows from reasonings in the next paragraph).

Since $C$ is an ample divisor, there is $n_0 > 0$ such that for any $n > n_0$
we have a surjective map
\begin{equation}  \label{sur}
H^0(X, \ff(nC))  \lrto  H^0(X, \ff(nC)/ \ff((n-1)C))  \, \mbox{.}
\end{equation}
This means that the  Abelian group $H^0 (U, \ff \mid_U)$  is mapped surjectively onto the
{Abelian group} $\bigoplus\limits_{n > n_0} H^0(X, \ff(nC)/ \ff((n-1)C))$ by means of maps given by formula~\eqref{sur},
and the kernel is a finitely generated Abelian group.

This all together gives the proof of the first statement of the proposition.

{\em 2.} Exact sequence of locally free sheaves
$$
0 \lrto \ff((-m-1)C)  \lrto \ff(-mC)  \lrto \ff(-mC) / \ff((-m-1)C)  \lrto 0
$$
induces  the long exact sequence of cohomology groups
\begin{multline*}
H^0(X, \ff(-mC))  \lrto H^0(X, \ff(-mC)/ \ff((-m-1)C)) \lrto \\
\lrto
 H^1(X, \ff((-m-1)C)  \lrto H^1(X, \ff(-mC))  \lrto 0  \, \mbox{.}
\end{multline*}
Besides, there is an integer $m_0 \ge 0$ such that for any $m \ge m_0$ the group $H^0(X, \ff(-mC))$
is zero, because  the free Abelian group $H^0(X, \ff)$ is finitely generated and
$$
\bigcap_{j \ge 0} H^0(X, \ff(-jC))  = 0  \, \mbox{.}
$$

For any integer $n$ the finitely generated free Abelian group ${H^0(X, \ff(nC)/ \ff((n-1)C))}$ is a cocompact lattice in an $\dr$-vector space
$$H^0(X, \ff(nC)/ \ff((n-1)C)) \otimes_{\dz} \dr \simeq
\left(\prod\limits_{1 \le i \le w} \widehat{\ff(nC)}_{C_i}
\hat{\otimes}_{\Q}  \dr \right) / \left( \prod\limits_{1 \le i \le w} \widehat{\ff((n-1)C)}_{C_i}
\hat{\otimes}_{\Q}  \dr \right) $$
such that the quotient group of this vector space by the lattice is isomorphic to $\mathbb{T}^{\, rl}$.

This gives the proof of the second statement of the proposition.
\end{proof}

From the proof of Proposition~\ref{theta} (see formula~\eqref{integer-form}) we obtain that
\begin{equation}  \label{comp}
\frac{ \prod\limits_{1 \le i \le w} \left( K_{C_i}(\ff) \hat{\otimes}_{\Q} \dr \right) }{
A_C(\ff) + \left( A_{U_{\Q}}(\ff) \otimes_{\Q}  \dr  \right)}  \;
\simeq  \;  \varprojlim\limits_{m \ge 0} \Theta_m(\ff)  \, \mbox{,}
\end{equation}
This  evidently implies that the group from formula~\eqref{comp} is compact.

\medskip

\begin{nt}{\em  \label{Weng}
It is naturally to consider the subgroup $\da_{X, 12}^{\rm ar}(\ff)$ of the arithmetic adelic group~$\da_{X}^{\rm ar}(\ff)$:
$$
\da_{X,12}^{\rm ar} (\ff) \eqdef \da_{X,12}(\ff) \; \oplus \mathop{{\prod}}_{D, f(D) =\Spec \dz}  \hat{\ff}_D \hat{\otimes}_{\Q} \dr \; = \;
\mathop{{\prod}'}_{x \in D} \oo_{K_{x,D}}(\ff) \;
\oplus \mathop{{\prod}}_{D, f(D) \; = \; \Spec \dz}  \hat{\ff}_D \hat{\otimes}_{\Q} \dr   \, \mbox{.}
$$
K.~Sugahara and L.~Weng introduced in~\cite{SW1} the following groups:
\begin{gather*}
H^0_{\rm ar}(X, \ff) = \da_{X,01}^{\rm ar}(\ff)  \cap \da_{X, 02}^{\rm ar}(\ff)  \cap \da_{X, 12}^{\rm ar} (\ff)  \qquad \mbox{and} \\
H^2_{\rm ar}(X, \ff) = \da_{X}^{\rm ar}(\ff) / \left(\da_{X,01}^{\rm ar}(\ff)  + \da_{X, 02}^{\rm ar}(\ff)  + \da_{X, 12}^{\rm ar} (\ff) \right) \mbox{.}
\end{gather*}
From Theorem~\ref{main-Th}, Remark~\ref{C-curve} and Proposition~\ref{theta} we immediately obtain that
$$
H^2_{\rm ar}(X, \ff) \simeq \Theta_0(\ff) \simeq  H^1(X, \ff)  \otimes_{\dz} \mathbb{T} \, \mbox{.}
$$
Besides, from Lemma~\ref{Inters}  it is easy to see that
$$
H^0_{\rm ar}(X, \ff) \simeq H^0(X, \ff)  \, \mbox{.}
$$

We note also that  a topological duality between the topological groups $H^i_{\rm ar}$ and $H^{2-i}_{\rm ar}$ was defined in~\cite{SW1,SW2} by means the properties of topology of inductive and projective limits on adelic groups (we don't give here the definition of the group $H^1_{\rm ar}$ and don't specify the sheaves in cohomology groups which appear in the duality). More on this topological duality it will be written also in a subsequent paper~\cite{Osip2}.
}
\end{nt}

\begin{nt}{\em \label{Arakelov}
By formula~\eqref{comp} (and Proposition~\ref{theta}) we obtained that the last non-zero term in exact sequence~\eqref{main-seq} from Theorem~\ref{main-Th} equals to the projective limit of 
groups which are finite direct products of copies of
 $\dt$. This gives an interesting analogy
with values of theta-functions (or, in other words, $\theta$-invariants) constructed by a cocompact lattice $E$ in a finite-dimensional $\dr$-vector space $\prod\limits_{\upsilon} K_{\upsilon}$
 which appear in exact sequence~\eqref{one-dim} in the Introduction. By these $\theta$-invariants one obtains   the Riemann-Roch theorem in  the form of Poisson summation formula in one-dimensional Arakelov geometry, and this was invented by many people, see~\cite{Bo}.

Indeed, let $\overline{M}$ be a Hermitian vector bundle over the ``arithmetic curve'' $\Spec E$. Then, by definition,
$$
h^0_{\theta}(\overline{M})= \log (\sum_{v \in M} \exp(- \pi \parallel v \parallel^2_{g_* \overline{M}}) \, \mbox{,}  \qquad
h^1_{\theta}(\overline{M})= h^0_{\theta} (\overline{\omega}_{E/\dz}  \otimes \overline{M}^{\vee}) \, \mbox{,}
$$
where $M$ is the corresponding projective $E$-module, $g: \Spec E \to \Spec \dz$  is the natural morphism,
$\parallel \cdot \parallel_{g_* \overline{M}}$ is the Euclidean norm on the $\dr$-vector space $M \otimes_{\dz} \dr$ corresponding to the Hermitian vector bundle $g_* \overline{M}$ over the ``arithmetic curve'' $\Spec \dz$,
 $\overline{M}^{\vee}$ is the dual to $\overline{M}$ Hermitian vector bundle, $\overline{\omega}_{E/\dz}$
 is the canonical Hermitian line bundle over $\Spec E$. Then the Poisson summation formula implies
$$
h^0_{\theta}(\overline{M}) - h^1_{\theta}(\overline{M})=
\widehat{\mathop{\rm deg}} \ \overline{M} - \frac{1}{2} \log \mid \Delta_K \mid \cdot \ {\mathop{\rm rk}} M  \, \mbox{,}
$$
where  $\widehat{\mathop{\rm deg}}$ is the Arakelov degree of a Hermitian bundle, $\Delta_K$  is the discriminant of  the number field $K$.
 }
\end{nt}
\begin{nt}{\em
More on generalizations of Remark~\ref{Weng} and $\theta$-invariants from Remark~\ref{Arakelov} it
will be written in a subsequent paper~\cite{Osip2}.}
\end{nt}

\vspace{0.3cm}

{\small
\noindent Steklov Mathematical Institute of Russsian Academy of Sciences, 8 Gubkina St., Moscow 119991, Russia \\

\smallskip

\noindent National Research University Higher School of Economics, Laboratory of Mirror Symmetry, NRU HSE, 6 Usacheva str., Moscow 119048, Russia \\

\smallskip

\noindent National University of Science and Technology ``MISiS'',  Leninsky Prospekt 4, Moscow  119049, Russia
}

\medskip

\noindent {\it E-mail:}  ${d}_{-} osipov@mi.ras.ru$

\end{document}